\newtheorem{thm}{Theorem}[section]
\newtheorem{lem}[thm]{Lemma}
\newtheorem{prop}[thm]{Proposition}
\newtheorem{cor}[thm]{Corollary}
\theoremstyle{definition}
\newtheorem{rem}[thm]{Remark}
\newtheorem{qst}[thm]{Question}
\newtheorem{prob}[thm]{Problem}
\newenvironment{exm}
  {\pushQED{\qed}\examplex}
  {\popQED\endexamplex}
\numberwithin{equation}{section}
\newcommand{\lmmenor}{G}
\newcommand{\lmesq}{{}_yG}
\newcommand{\lmdir}{G_y}
\newcommand{\lmmaior}{{}_yG_y}
\newcommand{\Ri}{R_\infty}
\newcommand{\Vbr}{V_\textup{br}}
\newcommand{\Fbr}{F_\textup{br}}
\newcommand{\x}{\overline{x}}
\newcommand{\N}{\mathbb{N}} 
\newcommand{\Z}{\mathbb{Z}} 
\newcommand{\Q}{\mathbb{Q}} 
\newcommand{\R}{\mathbb{R}} 
\newcommand{\phee}{\varphi} 
\newcommand{\barra}[1]{\overline{#1}}
\newcommand{\barrabarra}[1]{\overline{\overline{#1}}}
\newcommand{\gera}[1]{\langle {#1} \rangle}
\newcommand{\set}[1]{\{ #1 \}}
\newcommand{\mc}{\mathcal}
\newcommand{\into}{\hookrightarrow}
\newcommand{\onto}{\twoheadrightarrow}
\newcommand{\nsgp}{\trianglelefteq} 
\newcommand{\ab}[1]{{#1}^{\mathrm{ab}}}
\DeclareMathOperator{\Hom}{Hom}
\DeclareMathOperator{\Aut}{Aut}
\DeclareMathOperator{\Out}{Out}
\DeclareMathOperator{\id}{id}
\DeclareMathOperator{\Fix}{Fix}
\DeclareMathOperator{\GL}{GL} 
\DeclareMathOperator{\SL}{SL} 
\title[Thompson groups, Reidemeister numbers, fixed points]{Thompson-like groups, Reidemeister numbers, and fixed points}
\author[P.~M. Lins de Araujo]{Paula M. Lins de Araujo}
\thanks{The first author was supported by the long term structural funding \emph{Methusalem Grant} of the Flemish Government, Belgium. The third author was partially supported by the \emph{Deutsche Forschungsgemeinschaft} (DFG, German Research Foundation), 314838170, GRK 2297 \emph{MathCoRe}.}
\address{
Katholieke Universiteit Leuven\\ 
Wiskunde, \textit{Campus} Kulak -- Kortrijk, Etienne Sabbelaan 53, bus 7657, 8500 Kortrijk, Belgi\"e}
\curraddr{University of Lincoln, School of Mathematics and Physics, \newline
Isaac Newton Building, Brayford Pool, LN6~7TS, Lincoln, United Kingdom}
\email{\href{mailto:pmacedolinsdearaujo@lincoln.ac.uk}{pmacedolinsdearaujo@lincoln.ac.uk}}
\author[A.~S. Oliveira-Tosti]{Altair S. de Oliveira-Tosti}
\address{Universidade Estadual do Norte do Paran\'a\\
\textit{Campus} Corn\'elio Proc\'opio\\
PR 160, Km 0, 86300-000, Corn\'elio Proc\'opio, Brasil}
\email{\href{mailto:altair@uenp.edu.br}{altair@uenp.edu.br}, \href{mailto:altairsot@gmail.com}{altairsot@gmail.com}}
\author[Y. Santos Rego]{Yuri Santos Rego}
\address{Otto-von-Guericke-Universit\"at Magdeburg\\
FMA -- Institut f\"ur Algebra und Geometrie\\
PSF 4120, 39016 Magdeburg, Deutschland}
\email{\href{mailto:yuri.santos@ovgu.de}{yuri.santos@ovgu.de}}
\begin{document}

\begin{abstract}
We investigate fixed-point properties of automorphisms of groups similar to R.~Thompson's group \(F\). Revisiting work of Gon\c{c}alves--Kochloukova, we deduce a cohomological criterion to detect infinite fixed-point sets in the abelianization, implying the so-called property~\(R_\infty\). 
Using the BNS \(\Sigma\)-invariant and drawing from works of Gon\c{c}alves--Sankaran--Strebel and Zaremsky, we show that our tool applies to many \(F\)-like groups, including Stein's \(F_{2,3}\), Cleary's \(F_\tau\), the Lodha--Moore groups, and the braided version of \(F\).
\end{abstract}

\maketitle

\section{Introduction}
Many groups admit automorphism groups with a rich structure. Though in general, 
fully describing automorphism groups can be challenging. Given a group $\Gamma$ with unknown $\Aut(\Gamma)$, one might draw inspiration from dynamics and ask for qualitative information on arbitrary elements $\phee \in \Aut(\Gamma)$. For instance, one may ask whether $\phee$ is periodic (i.e., of finite order), how the subgroup of fixed points $\Fix(\phee)$ looks like, whether $\phee$ stabilizes interesting subsets of $\Gamma$ besides characteristic subgroups, or if the whole group $\Aut(\Gamma)$ acts on an interesting object.

In this work we address questions concerning fixed-point properties and stabilized subsets of automorphisms of groups in a family $\mathcal{F}$ of Thompson-like groups. That is, we look at relatives of R.~Thompson's group $F$, which is a group of dyadic rearrangements of the unit interval~\cite{CannonFloydParry}. The groups we look at are not residually finite, are typically finitely presented, and include nonamenable examples. 
Throughout we let $\mathcal{F}$ denote the family consisting of the following:
\begin{enumerate}
    \item the $F$-like groups $G(I;A,P)$ of Bieri--Strebel~\cite{BieriStrebelPL,DacibergParameshStrebel};
    \item the braided variant $\Fbr$ of Thompson's group $F$ of Brady--Burillo--Cleary--Stein~\cite{BBCS}; and
    \item the Lodha--Moore groups $\lmmenor, \lmesq, \lmdir, \lmmaior$ introduced in~\cite{LodhaMoore2016};
\end{enumerate}
cf. Section~\ref{sec:osgrupos} for precise definitions of the groups above. We remark that 
Thompson's $F$, Stein's $F_{2,3}$ and Cleary's irrational-slope group $F_\tau$ 
all belong to $\mathcal{F}$; see Section~\ref{sec:BiSt}. Our main result is the following.

\begin{thm}\label{mainthm} 
Let $\Gamma$ be a group in the family $\mathcal{F}$ {as above} and let $\phee \in \Aut(\Gamma)$ be arbitrary. Then $\phee$ stabilizes (set-wise) infinitely many cosets of the commutator subgroup $[\Gamma,\Gamma]$. Equivalently, the fixed-point set of the induced map $\ab{\phee}$ on the abelianization $\ab{\Gamma}$ is infinite. 
\end{thm}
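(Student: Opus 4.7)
The plan is to reduce the theorem to a linear-algebra statement and then apply a criterion derived from the BNS invariant. A coset $g[\Gamma,\Gamma]$ is $\phee$-stable if and only if $\ab{\phee}([g])=[g]$, so the two formulations in the conclusion coincide, and it suffices to show that $\Fix(\ab{\phee})$ is infinite. For each $\Gamma \in \mc{F}$ the abelianization $\ab{\Gamma}$ is known to be free abelian of some finite rank $n = n(\Gamma)$, which I would verify from the literature family by family. Under this torsion-freeness, $\Fix(\ab{\phee})$ is infinite if and only if $1$ is an eigenvalue of the integer matrix $\ab{\phee} \in \GL_n(\Z)$, so the problem becomes that of producing a nonzero class in $H^1(\Gamma;\Q)$ fixed by the pullback $\phee^*$ (which has the same characteristic polynomial as $\ab{\phee}$ by duality).

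The key mechanism I would use is the naturality of the Bieri--Neumann--Strebel invariant under $\Aut(\Gamma)$: any $\phee \in \Aut(\Gamma)$ induces an action on the character sphere $S(\Gamma)=(H^1(\Gamma;\R)\setminus\{0\})/\R_{>0}$ that preserves $\Sigma^1(\Gamma)^c$. For each family in $\mc{F}$, this complement has been explicitly computed and can be identified with the classes of finitely many distinguished primitive integral characters $\chi_1,\ldots,\chi_m \in H^1(\Gamma;\Z)$; I would invoke the descriptions of Bieri--Strebel and Gon\c{c}alves--Sankaran--Strebel for the $F$-like groups $G(I;A,P)$, those of Zaremsky for the Lodha--Moore family, and the relevant references for $\Fbr$. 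Because $\phee^*$ is an automorphism of the lattice $H^1(\Gamma;\Z)$, it sends primitive vectors to primitive vectors; combined with the fact that $\phee^*(\chi_i)$ is forced to be a strictly positive scalar multiple of some $\chi_{\sigma(i)}$, this pins down $\phee^*(\chi_i)=\chi_{\sigma(i)}$ on the nose. I expect this rigidity---integrality and BNS-invariance together forcing a genuine permutation action on a canonical finite set of characters---to be the essential content of the cohomological criterion of Gon\c{c}alves--Kochloukova that the paper revisits.

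With $\phee^*$ acting as a permutation on $\{\chi_1,\ldots,\chi_m\}$ through some $\sigma$, every orbit-sum $\sum_{j \in O} \chi_j$ is automatically $\phee^*$-fixed, and to finish it suffices to exhibit an orbit whose sum is nonzero in $H^1(\Gamma;\Q)$, since then $1$ appears as an eigenvalue of $\phee^*$ and hence of $\ab{\phee}$. The main obstacle I anticipate lies precisely at this step, namely verifying uniformly across $\mc{F}$ both that the $\chi_i$ span $H^1(\Gamma;\Q)$ and that some orbit-sum does not vanish; in particular one must rule out pathological configurations where characters come in $\pm$-pairs swapped by $\sigma$. For the $G(I;A,P)$ family this should be straightforward because the slope-at-endpoint characters form a $\Z$-basis lying in a common open half-space, so every orbit-sum is manifestly nonzero; the Lodha--Moore case should follow from the explicit form of Zaremsky's computation. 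The most delicate case, which I would treat last and possibly with a tailored argument, is the braided Thompson group $\Fbr$, where the interaction between the braid structure, the abelianization, and the BNS invariant is less transparent.
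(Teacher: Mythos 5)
Your reduction to finding a nonzero $\phee^*$-fixed class in $H^1(\Gamma;\Q)$, and your mechanism for the Lodha--Moore groups and $\Fbr$ --- $\Aut$-invariance of $\Sigma^1(\Gamma)^c$, discreteness/primitivity forcing $\phee^*$ to genuinely permute the finitely many distinguished characters, and non-vanishing of an orbit-sum via linear independence --- is essentially the paper's argument for those families (the paper routes the conclusion through a twisted-conjugacy lemma on a quotient of $\ab{\Gamma}$ rather than your eigenvalue-$1$ duality argument, but that difference is cosmetic). Your worry about $\pm$-pairs is exactly what the paper's Corollary~\ref{cor:laststep} (linear independence of the two characters modulo $N=\ker\chi_1\cap\ker\chi_2$) disposes of, and $\Fbr$ is not actually delicate: Zaremsky's computation gives $\Sigma^1(\Fbr)^c=\{[\varphi_0],[\varphi_1]\}$ with the two leaf-depth characters independent, so it is on the same footing as the Lodha--Moore groups.

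The genuine gap is in the family you labelled ``straightforward,'' namely the $F$-like Bieri--Strebel groups $G(I;A,P)$. Your premise that $\Sigma^1(\Gamma)^c$ is a finite set of classes of \emph{discrete} (integral, primitive) characters fails there in general: for Stein's $F_{2,3}$, Spahn--Zaremsky showed that $\Sigma^1(F_{2,3})^c$ contains nondiscrete characters (the paper points this out explicitly when explaining why the Gon\c{c}alves--Kochloukova criterion does not apply to $F_{2,3}$). Indeed, when $P$ is not cyclic the endpoint-slope characters $g\mapsto\log(\text{slope of }g\text{ at }0)$ have dense image in $\R$, so they are neither integral nor primitive lattice vectors, the ``positive multiple of a primitive vector implies equality'' rigidity is unavailable, and the complement need not even be finite --- so there is no permutation $\sigma$ and no orbit-sum to form. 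Relatedly, $\ab{\Gamma}$ need not be free abelian for these groups. The paper bypasses all of this by importing Gon\c{c}alves--Sankaran--Strebel's Theorem~1.4 (Theorem~\ref{thm:GNT} here), which directly supplies a nontrivial $f\in\Hom(\Gamma,\R)$ with $f\circ\phee=f$ for every $\phee\in\Aut(\Gamma)$; its proof uses the $\Sigma$-invariant in a more delicate way than a finite permutation argument. To repair your proof you must either quote that result for the $G(I;A,P)$ family or supply a substitute argument; the permutation-of-discrete-characters mechanism alone does not cover it.
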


This phenomenon --- that is, all automorphisms having infinitely many fixed points in the abelianization --- has been observed for other interesting families. For instance, many soluble arithmetic groups exhibit this property; see, e.g.,~\cite{PaYu0,TimurUnitri}.  
In contrast, other groups occurring naturally --- such as free or free nilpotent groups --- do not satisfy this; cf. Section~\ref{sec:newbackground} for a discussion. 

A consequence of Theorem~\ref{mainthm} is the following implication about Reidemeister numbers, which give the number of orbits of the twisted conjugation action of group automorphisms; we refer to Section~\ref{sec:newbackground} for definitions.  

\begin{cor} \label{maincorollary}
All groups in the family $\mathcal{F}$ have \emph{property}~$\Ri$, that is, the Reidemeister number of any of their automorphisms is infinite.
\end{cor}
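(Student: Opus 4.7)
The plan is to deduce Corollary~\ref{maincorollary} from Theorem~\ref{mainthm} via the standard functoriality of Reidemeister numbers. Specifically, I would invoke the folklore fact that a surjection $\pi \colon \Gamma \twoheadrightarrow Q$ intertwining an automorphism $\phee \in \Aut(\Gamma)$ with some $\phee_Q \in \Aut(Q)$ induces a surjective map between the corresponding sets of twisted conjugacy classes, giving $R(\phee) \geq R(\phee_Q)$. Applied to the abelianization map $\Gamma \twoheadrightarrow \ab{\Gamma}$ with $\phee_Q = \ab{\phee}$ --- which is well-defined because $[\Gamma,\Gamma]$ is characteristic --- this reduces the corollary to the claim that $R(\ab{\phee}) = \infty$ for every $\phee \in \Aut(\Gamma)$ and every $\Gamma \in \mc F$.

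For an endomorphism $\psi$ of an abelian group $A$ written additively, the $\psi$-twisted conjugacy classes coincide with the cosets of $(\psi-\id)(A)$, whence $R(\psi) = [A : (\psi-\id)(A)]$. For each $\Gamma \in \mc F$, the abelianization $\ab{\Gamma}$ is a finitely generated abelian group (as recalled in Section~\ref{sec:osgrupos}, and in fact free abelian of small rank in each case). Applying the rank formula to $\ab{\phee} - \id \colon \ab{\Gamma} \to \ab{\Gamma}$, the image $(\ab{\phee}-\id)(\ab{\Gamma})$ has finite index in $\ab{\Gamma}$ if and only if $\ker(\ab{\phee} - \id) = \Fix(\ab{\phee})$ has rank zero, i.e., is finite.

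By Theorem~\ref{mainthm}, the fixed-point set $\Fix(\ab{\phee})$ is infinite, so $R(\ab{\phee}) = \infty$, and hence $R(\phee) = \infty$ by the reduction of the first paragraph. No substantive obstacle is expected here: both the surjectivity of induced maps on twisted conjugacy classes and the abelian computation of $R(\psi)$ via $(\psi-\id)(A)$ are standard in Reidemeister theory; the only nontrivial input is Theorem~\ref{mainthm} itself, which is already in hand, while the finite generation of each $\ab{\Gamma}$ is documented case by case for the families comprising $\mc F$.
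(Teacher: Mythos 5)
Your proof is correct and follows the same route as the paper: reduce to the abelianization via $R(\phee)\ge R(\ab{\phee})$ (the paper's Lemma~\ref{OutroLeminhaDeSempre}), then convert ``infinite fixed-point set'' to ``infinite Reidemeister number'' in the finitely generated abelian case (the paper's Lemma~\ref{LeminhaDeSempre}), then quote Theorem~\ref{mainthm}. The only difference is that you re-derive the abelian lemma from scratch via the identification of twisted conjugacy classes with cosets of $(\psi-\id)(A)$ and a rank count, where the paper simply cites it.
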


The result above is proved as Corollary~\ref{cor:ThompsonwithRinfty} in Section~\ref{sec:ProofsThompson}. For Thompson's group $F$, property~$\Ri$ was known by work of Bleak--Fel'shtyn--Gon\c{c}alves~\cite{BleakFelshtynDaciberg}. For the $F$-like Bieri--Strebel groups it was established by Gon\c{c}alves--Kochloukova and Gon\c{c}alves--Sankaran--Strebel~\cite{GonDesi2010,DacibergParameshStrebel}, though it was not explicitly stated for Stein's $F_{2,3}$ nor Cleary's $F_\tau$. To the best of our knowledge we record here the first proof that $\Fbr$ and the Lodha--Moore groups have property~$\Ri$. Despite this, we remark that this fact is found implicitly in the literature as it can also be deduced by combining the works of Zaremsky~\cite{Zar2016,braidedVZaremskyNormal} and Gon\c{c}alves--Kochloukova~\cite{GonDesi2010}; see the alternative proof of Corollary~\ref{cor:ThompsonwithRinfty} for such groups in Section~\ref{sec:applications}. Paraphrasing Zaremsky~\cite{Zar2016}, our results provide a further point of similarity between the Lodha--Moore groups and Thompson's $F$ --- though by the time of writing it is still unknown whether $F$ is nonamenable. 

Our main technical result, however, is Theorem~\ref{thm:tecnico} in Section~\ref{sec:novascoisas}. Roughly speaking, it is a cohomological fixed-point criterion to check for property~$\Ri$. This theorem is a generalization of the (implicit) core idea behind the main results of~\cite{GonDesi2010}. Instead of stating it here in full generality, we record a special case below which might be of independent interest; cf. Theorem~\ref{thm:tecnicohomology} for the general version.

\begin{thm}\label{thm:babyversiontecnicointro} 
If a finitely generated group $\Gamma$ \emph{does not} have property~$\Ri$, then the canonical action of $\Aut(\Gamma)$ on the first integral cohomology $H^1(\Gamma)$ \emph{does not admit} nonzero global fixed points.
\end{thm}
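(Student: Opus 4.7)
The plan is to prove the contrapositive: assume there is some $\phee \in \Aut(\Gamma)$ with finite Reidemeister number $R(\phee)$, and show that the only $\Aut(\Gamma)$-fixed class in $H^1(\Gamma) = H^1(\Gamma;\Z)$ is zero. Since $\Z$ carries the trivial $\Gamma$-action, there is a canonical identification $H^1(\Gamma;\Z) \cong \Hom(\Gamma,\Z) \cong \Hom(\ab{\Gamma},\Z)$, under which the natural $\Aut(\Gamma)$-action is precomposition with induced automorphisms of $\ab{\Gamma}$ (the sign convention being irrelevant for fixed points). A global fixed point is thus a homomorphism $\chi\colon \ab{\Gamma} \to \Z$ satisfying $\chi \circ \ab{\phee'} = \chi$ for every $\phee' \in \Aut(\Gamma)$; in particular, $\chi \circ \ab{\phee} = \chi$ for the fixed $\phee$.

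Since $\Gamma$ is finitely generated, one may write $\ab{\Gamma} \cong T \oplus \Z^n$ with $T$ the characteristic torsion subgroup; any $\chi\colon \ab{\Gamma} \to \Z$ annihilates $T$ and factors through $\ab{\Gamma}/T \cong \Z^n$ as a linear functional $\chi_0$. The automorphism $\ab{\phee}$ preserves $T$ and descends to an automorphism $M \in \GL_n(\Z)$ of $\ab{\Gamma}/T$, so the invariance $\chi \circ \ab{\phee} = \chi$ becomes $\chi_0 \circ M = \chi_0$. If $\chi \neq 0$ then $\chi_0 \neq 0$, so $\chi_0$ is a nonzero element of the left kernel of $M - \id$; this forces $\det(M - \id) = 0$, exhibiting $1$ as an eigenvalue of $M$.

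To conclude, I would invoke two standard facts about Reidemeister numbers. First, for a finitely generated abelian group $A$ with endomorphism $\psi$, one has $R(\psi) = |A/(\id - \psi)(A)|$, which is infinite precisely when $\id - \psi$ fails to be injective on $A \otimes \Q$; applied to $M$ on $\Z^n$, this gives $R(M) = \infty$. Second, the projection $\pi\colon \Gamma \onto \ab{\Gamma}/T$ intertwines $\phee$ with $M$, so twisted conjugacy classes of $\phee$ surject onto those of $M$ (since $\pi(g x \phee(g)^{-1}) = \pi(x) + (\id - M)(\pi(g))$ in additive notation), yielding $R(\phee) \geq R(M) = \infty$ and contradicting $R(\phee) < \infty$. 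Hence $\chi = 0$. The only real obstacle I anticipate is pinning down the precise conventions --- whether the $\Aut(\Gamma)$-action on $H^1$ is pullback or its inverse, and correspondingly whether $\chi_0$ is viewed as a left- or right-eigenvector of $M$ --- but these choices do not affect the argument, which then amounts to elementary linear algebra over $\Z$ combined with the well-known reduction of Reidemeister numbers to the abelianization.
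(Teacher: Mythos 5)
Your argument is correct, and it reaches the conclusion by the same underlying dictionary as the paper (a nonzero fixed functional on the abelianization forces $1$ to be an eigenvalue of the induced map, which forces an infinite Reidemeister number there, which bounds $R(\phee)$ from below). The difference is in the packaging: the paper deduces Theorem~\ref{thm:babyversiontecnicointro} as the contrapositive of Theorem~\ref{thm:tecnicohomology}, which in turn reduces to the general Theorem~\ref{thm:tecnico} (allowing an arbitrary $\phee$-invariant $N\leq\ker f$ and an arbitrary torsion-free abelian target $A$), and that machinery invokes Lemmas~\ref{LeminhaDeSempre} and~\ref{OutroLeminhaDeSempre} rather than citing eigenvalues explicitly. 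You instead inline the whole thing for the special case $N=1$, $A=\Z$: you pass directly to $\ab{\Gamma}/T\cong\Z^n$, note that a nonzero $\Aut(\Gamma)$-fixed class in $H^1(\Gamma)\cong\Hom(\ab{\Gamma},\Z)$ gives a nonzero left null vector of $M-\id$, hence $\det(M-\id)=0$, hence $R(M)=|\Z^n/(\id-M)\Z^n|=\infty$, and then pull this back along the characteristic projection $\Gamma\onto\ab{\Gamma}/T$. This buys a shorter, self-contained linear-algebraic proof of the baby version; the cost is that it does not subsume the full strength of Theorem~\ref{thm:tecnico}, which the paper needs elsewhere (e.g.\ with a nontrivial $N$ built from kernels of BNS characters and $A=\R$). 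One small remark: since $\R$-valued characters also factor through $\ab{\Gamma}/T$, your eigenvalue argument would extend verbatim to any torsion-free abelian $A$, so with the extra $N$ bookkeeping you could in fact recover Theorem~\ref{thm:tecnico} along the same lines.
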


The previous result is motivated by, and further highlights, connections between Reidemeister numbers and fixed-point results in algebra, geometry and topology; see Section~\ref{sec:newbackground} for examples and references. Other representation-theoretic properties concerning the existence of fixed points (or lack thereof) include Kazhdan's property~(T), the Haagerup property, and Serre's property~FA; cf.~\cite{TBook,OlgaRepsCAT0}. It is unknown to us whether there is a connection between property~$\Ri$ for a group $\Gamma$ and its automorphism group $\Aut(\Gamma)$ having (or not) property~(T).

Regarding the proofs, Theorem~\ref{mainthm} is shown by combining Theorem~\ref{thm:tecnico} with well-known results about characters and the Bieri--Neumann--Strebel $\Sigma$-invariant~\cite{BNS}. For groups in $\mc{F}$, the $\Sigma$-invariants were studied by Gon\c{c}alves--Sankaran--Strebel~\cite{DacibergParameshStrebel} and Zaremsky~\cite{Zar2016,braidedVZaremskyNormal}. The general version of Theorem~\ref{thm:babyversiontecnicointro} is stated in Section~\ref{sec:novascoisas} and follows easily from Theorem~\ref{thm:tecnico} and standard facts about cohomology. 

The organization of these notes is as follows. Section~\ref{sec:newbackground} is an exposition where we recall known discoveries about Reidemeister numbers and fixed-point results, posing motivating questions, considering examples, and discussing the state of knowledge. (Section~\ref{sec:newbackground} is thus independent of the material on Thompson-like groups, and our questions might be of general interest.) In Section~\ref{sec:osgrupos} we give a brief introduction to the Thompson-like groups we consider. We then recall statements about their BNS $\Sigma$-invariant in Section~\ref{sec:sigma-reid}. Our main results are proved in Section~\ref{sec:novascoisas}. Motivated by fixed-point phenomena studied here and in the literature, we raise multiple related questions throughout the text.

\section{Background -- Reidemeister numbers and fixed points} \label{sec:newbackground}

Properties relating 
group actions to the topological study of fixed points have been of paramount importance in multiple areas \cite{TBook,DacibergWongCrelle,OlgaRepsCAT0}. Among those is 
\emph{property}~$\Ri$, which combines automorphisms and conjugation. 
Given 
$\phee \in \Aut(\Gamma)$, 
its \emph{Reidemeister number} $R(\phee)$ 
is the number of orbits of the $\phee$-twisted conjugation action $\Gamma \times \Gamma \to \Gamma$, $(g,a) \mapsto ga\phee(g)^{-1}$. One then says that 
$\Gamma$ has property~$\Ri$ in case $R(\phee) = \infty$ for every $\phee \in \Aut(\Gamma)$. 

Interest in Reidemeister numbers goes back to the 1930s, and checking whether a group has~$\Ri$ sheds some light on its automorphism group and on related fixed-point theorems. This is illustrated by results, e.g., for algebraic and Lie groups \cite[Theorem~10.1]{SteinbergEndo}, in algebraic topology \cite[Theorem~6.1]{DacibergWongCrelle}, and on dynamics of Gromov hyperbolic groups \cite[Theorem~0.1]{LevittLustig}. For instance, suppose $f : X \to X$ is a self-map of a compact connected simplicial complex such that the induced map $f_\ast$ on $\pi_1(X)$ is an automorphism. Then the \emph{Reidemeister trace} of $f$ takes values in a $\Z$-module whose rank is precisely $R(f_\ast)$; see~\cite{BerrickChatterjiMislin,GeogheganFix} for more on the Reidemeister trace and its connection to Bass' conjecture. In case $X$ is, additionally, a nilmanifold and $f$ is a self-homeomorphism, results of Lefschetz, Thurston and others imply that $f$ has no fixed points (up to homotopy) if and only if $R(f_\ast) = \infty$; cf.~\cite{DacibergWongCrelle}. 

From the group-theoretic perspective, the literature shows connections between the Reidemeister number $R(\phee)$ and fixed point sets (or stabilized subsets) of the given automorphism $\phee$, as we now elucidate. 

\begin{exm}[Folklore] \label{ex:noob}
Given $\phee \in \Aut(\Gamma)$, consider the map 
\begin{align*}
    F_\phee \colon \Gamma &\longrightarrow [1]_\phee \coloneqq \set{ g \cdot 1 \cdot \phee(g)^{-1} \mid g \in \Gamma} \\
 g &\longmapsto g\phee(g)^{-1}
\end{align*}
from $\Gamma$ onto the $\phee$-twisted conjugacy class of the identity $1 \in \Gamma$. Now look at the subgroup of fixed points $\Fix(\phee) = \set{g \in \Gamma \mid \phee(g) = g}$, sometimes also denoted by $C_\Gamma(\phee)$ and called the centralizer of $\phee$ in $\Gamma$. One has that $F_\phee$ is injective if and only if $\Fix(\phee) = \set{1}$. Hence, if $\Gamma$ is a \emph{finite} group, it holds $R(\phee) = 1 \iff |\Fix(\phee)| = {1}$.
\end{exm}

Example~\ref{ex:noob} also occurs for some linear algebraic groups as long as $\phee$ is an \emph{algebraic} automorphism; see, for instance, \cite{PaYu0,SteinbergEndo}. 

The case of abelian groups also has the following useful observation, which has been frequently used in the literature.

\begin{lem}[{\cite[Corollary~4.3]{KarelSam}}] \label{LeminhaDeSempre}
Assume $\Gamma$ is finitely generated abelian and let $\phee \in \Aut(\Gamma)$. Then $|\Fix(\phee)| = \infty \iff R(\phee) = \infty$.    
\end{lem}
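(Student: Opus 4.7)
The plan is to reduce the statement to an elementary rank computation involving the endomorphism $\id - \phee$ of $\Gamma$. Writing $\Gamma$ additively, the $\phee$-twisted conjugation action reads $(g,a) \mapsto a + (g - \phee(g))$, so the orbit of $a \in \Gamma$ equals the coset $a + (\id - \phee)(\Gamma)$. Hence the $\phee$-twisted conjugacy classes are precisely the elements of the cokernel $\Gamma / (\id - \phee)(\Gamma)$, giving $R(\phee) = |\operatorname{coker}(\id - \phee)|$. Meanwhile $\Fix(\phee) = \ker(\id - \phee)$ directly from the definition.

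The problem then reduces to the following purely homological assertion: for an arbitrary endomorphism $\psi$ of a finitely generated abelian group $\Gamma$, one has $|\ker \psi| = \infty$ if and only if $|\operatorname{coker} \psi| = \infty$. Since $\ker \psi$ and $\operatorname{coker} \psi$ are both finitely generated abelian, being infinite is equivalent to having positive free $\Z$-rank. To conclude, I would invoke additivity of rank on the short exact sequences
\[
0 \to \ker \psi \to \Gamma \to \psi(\Gamma) \to 0 \quad \text{and} \quad 0 \to \psi(\Gamma) \to \Gamma \to \operatorname{coker}\psi \to 0,
\]
which together yield $\operatorname{rank}(\ker \psi) = \operatorname{rank}(\operatorname{coker}\psi)$; hence the two groups are simultaneously finite or infinite.

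I do not anticipate a real obstacle here: the only subtlety is the observation that, for finitely generated abelian groups, being infinite is synonymous with having positive free rank, so the (finite) torsion part of $\Gamma$ plays no role. An equivalent approach would decompose $\Gamma \cong \Z^n \oplus T$ via the structure theorem, represent the relevant restriction of $\id - \phee$ by an integer matrix, and directly compare its nullity and rank; both routes deliver the lemma without further input.
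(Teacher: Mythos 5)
Your argument is correct and complete: identifying $R(\phee)$ with $|\operatorname{coker}(\id-\phee)|$ and $\Fix(\phee)$ with $\ker(\id-\phee)$, then comparing ranks via the two short exact sequences, is exactly the standard proof (rank is additive since $-\otimes_\Z\Q$ is exact, and a finitely generated abelian group is infinite precisely when it has positive rank). The paper does not give its own proof of this lemma — it simply cites \cite[Corollary~4.3]{KarelSam} — but your argument matches the one found there.
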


We stress that E.~Jabara~\cite{Jabara} generalized one of the above implications: replacing `abelian' by `\emph{residually finite}' it holds $|\Fix(\phee)| = \infty \implies R(\phee) = \infty$; see~\cite[Proposition~3.7]{PieterProducts} for a proof of Jabara's lemma. (Recall that $\Gamma$ is residually finite if the intersection of all its normal subgroups of finite index is trivial.)

In case one is set to check whether $R(\phee) = \infty$, the following well-known observation is particularly useful.

\begin{lem}[{\cite[Corollary~2.5]{KarelSam}}] \label{OutroLeminhaDeSempre}
Let $\phee \in \Aut(\Gamma)$ and suppose $N \nsgp \Gamma$ is $\phee$-invariant. Then $\phee$ induces an automorphism $\barra{\phee} \in \Aut(\Gamma / N)$ given by $gN \mapsto \phee(g)N$ and moreover $R(\phee) \geq R(\barra{\phee})$.
\end{lem}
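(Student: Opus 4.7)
The plan is to verify two things separately: first, that $\barra{\phee}$ is indeed a well-defined automorphism of the quotient, and second, that twisted conjugacy classes descend along the projection $\pi \colon \Gamma \to \Gamma/N$, thereby bounding $R(\barra{\phee})$ from above by $R(\phee)$.

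For the first claim, I would exploit $\phee$-invariance of $N$. Since $\phee$ is an automorphism and $\phee(N) \subseteq N$, we also get $\phee(N) = N$ (using $\phee^{-1}(N) \subseteq N$ coming from applying invariance to $\phee^{-1}$, or just a standard finiteness/descent argument when needed). Then the assignment $gN \mapsto \phee(g)N$ is well-defined: if $gN = g'N$ with $g^{-1}g' \in N$, then $\phee(g)^{-1}\phee(g') = \phee(g^{-1}g') \in \phee(N) = N$. It is a homomorphism because $\phee$ is. Bijectivity is immediate: surjectivity from $\phee(\Gamma) = \Gamma$, and injectivity from $\phee(g) \in N$ forcing $g \in \phee^{-1}(N) = N$.

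For the second claim, I would define a map at the level of twisted conjugacy classes,
\[
\Phi \colon [a]_\phee \longmapsto [\pi(a)]_{\barra{\phee}},
\]
and show that it is well-defined and surjective. Well-definedness follows directly from applying $\pi$ to the defining relation $b = g a\,\phee(g)^{-1}$, yielding $\pi(b) = \pi(g)\,\pi(a)\,\barra{\phee}(\pi(g))^{-1}$, so equivalent elements in $\Gamma$ map to equivalent elements in $\Gamma/N$. Surjectivity is even easier: any class $[x]_{\barra{\phee}}$ with $x \in \Gamma/N$ is the image of $[\tilde{x}]_\phee$ for any lift $\tilde{x} \in \pi^{-1}(x)$. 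Taking cardinalities (with the usual convention $\infty \geq n$) gives the inequality $R(\phee) \geq R(\barra{\phee})$.

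I do not expect a real obstacle here: both parts are formal, and the only subtlety worth flagging is the point that $\phee$-invariance together with $\phee$ being an automorphism upgrades $\phee(N) \subseteq N$ to $\phee(N) = N$, which is what makes the induced map on $\Gamma/N$ bijective rather than merely a monomorphism. Everything else is a direct computation with the definition of the twisted conjugation action $(g,a) \mapsto g a\,\phee(g)^{-1}$.
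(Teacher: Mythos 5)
The paper does not prove this lemma itself --- it is cited from Dekimpe--Gon\c{c}alves (\cite[Corollary~2.5]{KarelSam}) and used as a black box. Your argument is the standard one and is essentially correct: the key step is the surjection from the set of $\phee$-twisted conjugacy classes of $\Gamma$ onto the set of $\barra{\phee}$-twisted conjugacy classes of $\Gamma/N$, induced by $\pi$, and the computation $\pi(ga\phee(g)^{-1}) = \pi(g)\pi(a)\barra{\phee}(\pi(g))^{-1}$ does exactly what you claim.

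One side remark in your write-up is slippery, though it does not damage the proof. You assert that for an automorphism $\phee$, the containment $\phee(N)\subseteq N$ automatically upgrades to $\phee(N)=N$, ``using $\phee^{-1}(N)\subseteq N$ \ldots\ or just a standard finiteness/descent argument.'' Neither of these is automatic. Invariance of $N$ under $\phee$ does not by itself give invariance under $\phee^{-1}$, and there is no finiteness available here: for instance, if $\Gamma = \bigoplus_{i\in\Z}\Z$ with $\phee$ the shift $e_i\mapsto e_{i+1}$ and $N=\bigoplus_{i\geq 0}\Z$, then $\phee(N)\subsetneq N$ even though $\phee$ is an automorphism and $\phee(N)\subseteq N$. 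So the correct reading of ``$N$ is $\phee$-invariant'' in this lemma must already mean $\phee(N)=N$ (equivalently, invariance under the cyclic group $\langle\phee\rangle$), which is indeed what is needed for $\barra{\phee}$ to be an automorphism of $\Gamma/N$ rather than merely an injective endomorphism. In the paper's applications $N$ is always characteristic or a kernel fixed by the action on the character sphere, so $\phee(N)=N$ holds anyway; just be aware that the upgrade from $\subseteq$ to $=$ is part of the hypothesis, not a consequence of $\phee$ being bijective.
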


Since the commutator subgroup is characteristic, one always obtains from $\phee \in \Aut(\Gamma)$ an induced automorphism on the abelianization $\ab{\Gamma} = \Gamma/[\Gamma,\Gamma]$, which we henceforth denote by $\ab{\phee}$. 

Now, given an automorphism $\phee$ which is known to have infinite Reidemeister number, one might ask whether its fixed-point set $\Fix(\phee)$ is also infinite. 
This is not the case, not even assuming residual finiteness as in Jabara's lemma. In a remarkable paper, Cohen and Lustig, building upon work of Goldstein--Turner,
analysed the dynamics of automorphisms of free groups by looking at their action on a graph which precisely describes the twisted conjugacy classes in free groups.

\begin{exm}[{Cohen--Lustig~\cite{CohenLustigDynamics}}] \label{ex:noobfreegps}
Let $\Gamma = F_n$ be a finitely generated free group. Then one can construct automorphisms $\phee \in \Aut(F_n)$ with the following properties: 
\begin{enumerate}
\item $[\phee] \in \Out(F_n)$ is nontrivial,
\item the automorphism $\ab{\phee}$ induced on the abelianization $\ab{F_n}\cong \Z^n$ is the identity (thus $R(\phee)=\infty$ by Lemmas~\ref{LeminhaDeSempre} and~\ref{OutroLeminhaDeSempre}), 
but 
\item $|\Fix(\phee)|=1$.
\end{enumerate}
For an explicit example, take $\Gamma = F_3 = \gera{x,y,z}$ and 
\begin{align*}
    \phee \colon F_3 &\longrightarrow F_3 \\
 x &\longmapsto z^3 x z^{-3}, \\
 y &\longmapsto z^{-1}xz^2x^{-1}yz^{-1}, \\
 z &\longmapsto z\phee([y,x]). 
\end{align*}
It is straightforward to check that properties~(1) and~(2) hold, while property~(3) follows from \cite[Theorem~1]{CohenLustigDynamics}.
\end{exm}

We stress the importance of considering \emph{outer} automorphisms. Firstly, composing with inner automorphisms does not alter the Reidemeister number: for any $\iota \in \mathrm{Inn}(\Gamma)$ and all $\phee \in \Aut(\Gamma)$ it holds $R(\iota\circ\phee) = R(\phee)$; see~\cite[Corollary~2.3]{KarelSam}. Secondly, inner automorphisms might well have few fixed points.

\begin{exm} \label{ex:SL2Z}
Take $\Gamma = \SL_2(\Z)$. Straightforward computations 
show that the inner automorphism 
\[\iota\left(\left(\begin{smallmatrix} a&b\\c&d \end{smallmatrix}\right)\right)=\left(\begin{smallmatrix} 3&1\\2&1 \end{smallmatrix}\right)\left(\begin{smallmatrix} a&b\\c&d \end{smallmatrix}\right)\left(\begin{smallmatrix} 3&1\\2&1 \end{smallmatrix}\right)^{-1}=\left(\begin{smallmatrix} 3a-6b+c-2d &-3a+9b-c+d\\ 2a-4b+c-2d & -2a+6b-c+3d\end{smallmatrix}\right)\] satisfies \[\Fix(\iota)=\left\{\left(\begin{smallmatrix} 1&0\\0&1 \end{smallmatrix}\right), \left(\begin{smallmatrix} -1&0\\0&-1 \end{smallmatrix}\right)\right\}.\]
But the class number $R(\id)$ --- i.e., the total number of conjugacy classes --- of $\SL_2(\Z)$ is infinite; see, e.g.,~\cite{CCC} for a number-theoretic proof. 
Thus $R(\iota) = R(\iota\circ\id) = R(\id) = \infty$. 
\end{exm}

\begin{rem}
The groups $F_n$ and $\SL_2(\Z)$ actually have property~$\Ri$. This follows, e.g., from the fact that nonelementary Gromov hyperbolic groups do so; c.f.~\cite{LevittLustig}. (Recall that $\SL_2(\Z)$ is virtually free (on two generators), thus quasi-isometric to a finitely generated nonabelian free group, which in turn is Gromov hyperbolic.)
\end{rem}

In particular, Examples~\ref{ex:noobfreegps} and~\ref{ex:SL2Z} show that a converse to Jabara's lemma, mentioned above, cannot hold. Since fixed-point sets and Reidemeister numbers have a deeper connection in the abelian case, one might wonder whether a partial converse to Jabara's lemma holds for amenable groups. Once again it all fails, as the next result will show. 

\begin{prop} \label{PropK}
 There exists a finitely generated, residually finite, amenable group $\mathtt{GW}$ with property~$\Ri$ and an automorphism $\phee \in \Aut(\mathtt{GW})$ with the following properties.
  \begin{enumerate}
 \item $[\phee] \in \Out(\mathtt{GW})$ is nontrivial, and
 \item both $\Fix(\phee)$ and $\Fix(\phee^{\mathrm{ab}})$ are finite.
 \end{enumerate}
\end{prop}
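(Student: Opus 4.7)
My plan is to produce an explicit candidate group--automorphism pair drawn from the literature on groups with property~$\Ri$. The two strongest constraints are that $\mathtt{GW}$ must already be known to satisfy property~$\Ri$ (while being finitely generated, residually finite, and amenable) and that the outer $\phee$ must have both $\Fix(\phee)$ and $\Fix(\ab{\phee})$ finite. The cleanest way to handle the second condition is to restrict to groups $\mathtt{GW}$ whose abelianization is itself finite, so that $\Fix(\ab{\phee})$ is automatically finite for every $\phee \in \Aut(\mathtt{GW})$.

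A concrete candidate satisfying all the ambient hypotheses is Grigorchuk's first group $G$: it is $4$-generated, residually finite as a subgroup of $\Aut(T_2)$, amenable by Grigorchuk's celebrated subexponential growth theorem, has finite abelianization $\ab{G} \cong (\Z/2)^3$, and is known to have property~$\Ri$ by work of Fel'shtyn, Leonov and Troitsky on twisted conjugacy in saturated weakly branch groups. I would therefore set $\mathtt{GW} = G$ and pick $\phee$ using the explicit description of $\Aut(G)$ from the Grigorchuk--Sidki school. Alternative candidates I would keep in reserve are certain Gupta--Sidki $p$-groups, or polycyclic/metabelian examples with finite abelianization from the Gon\c{c}alves--Wong literature; in all these cases property~$\Ri$ is imported as a black box from prior work.

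For the verification, once $\mathtt{GW}$ and $\phee$ are fixed, item~(1) follows by picking $\phee$ so that $\ab{\phee}$ is nontrivial, which forces $[\phee] \in \Out(\mathtt{GW})$ to be nontrivial since $\mathrm{Inn}(\mathtt{GW})$ acts trivially on the abelianization. For item~(2), the finiteness of $\Fix(\ab{\phee})$ is automatic from $|\ab{\mathtt{GW}}| < \infty$, and the finiteness of $\Fix(\phee)$ itself is then shown by a direct descent argument in the self-similar / branch setting: one decomposes a putative $g \in \Fix(\phee)$ along the action of $\mathtt{GW}$ on $T_2$, reduces $\phee(g) = g$ to equations on the stabilizers of deeper tree levels, and concludes by rigidity that only finitely many (ideally only $g = 1$) such $g$ can exist.

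The hardest step will be verifying that $|\Fix(\phee)| < \infty$. Fixed-point subgroups of automorphisms of infinite residually finite groups tend to be quite large, so producing a $\phee$ with finite $\Fix(\phee)$ requires careful exploitation of structural features of $\mathtt{GW}$ --- the action on the rooted tree in the branch-group case, or a normal-form decomposition in the polycyclic case. If the first candidate $\phee$ turns out to fix an infinite subgroup, I would modify the choice (e.g., compose with a symmetry of the tree or a branch automorphism, or switch to a different example from the Gon\c{c}alves--Wong framework) until both finiteness conditions hold simultaneously; the existence statement in the proposition only requires a single working pair, so this experimentation is legitimate.
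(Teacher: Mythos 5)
Your proposal and the paper take genuinely different routes, but yours has a gap precisely at the step you yourself flag as hardest. The paper constructs a concrete crystallographic group $\mathtt{GW} = \Z^2 \rtimes_B \Z$ with $B = \left(\begin{smallmatrix} -1 & b \\ 0 & 1 \end{smallmatrix}\right)$ and takes $\phee$ to be the coordinate-wise inversion $(v,z) \mapsto (-v,-z)$, which is an automorphism because $B^2 = \id$. In this setting $\Fix(\phee) = \{1\}$ is immediate from the definition, $\Fix(\ab{\phee})$ is computed directly from a presentation of $\ab{\mathtt{GW}}$ (it is $C_2$ or trivial), $\phee$ is visibly non-inner because conjugation preserves the $\Z$-coordinate, and $R_\infty$ is imported from the fact that $\mathtt{GW}$ has the infinite dihedral group as a characteristic quotient. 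Every step is an explicit, finite computation.

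Your proposal replaces this with the Grigorchuk group (or Gupta--Sidki groups), which does have the right ambient properties and makes $\Fix(\ab{\phee})$ finite for free since $\ab{G}$ is finite. But the decisive step --- exhibiting a non-inner $\phee \in \Aut(G)$ with $\Fix(\phee)$ finite --- is left as a plan rather than a proof, and it is exactly the obstruction the paper itself identifies when discussing branch groups: by Lavreniuk--Nekrashevych the automorphisms of such groups are realized by tree automorphisms, and centralizers of tree automorphisms in these groups are typically infinite, so it is not clear any $\phee$ with finite $\Fix(\phee)$ exists at all. Your proposed ``descent along tree levels'' is a reasonable strategy sketch, but it is not carried out, and the fallback of ``modify $\phee$ until it works'' is not a proof of existence. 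As it stands the argument does not establish item~(2) of the proposition, so the proposal is incomplete; the paper's virtually abelian construction exists precisely to sidestep this difficulty.
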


\begin{proof}
Given a natural number $b$, let $B$ denote the matrix 
\[B = \left( \begin{smallmatrix} -1 & b \\ 0 & 1 \end{smallmatrix} \right) \in \GL_2(\Z).\]
The group $\mathtt{GW}$ is defined as the extension
\[ \mathtt{GW} := \Z^2 \rtimes_B \Z, \]
where $\Z$ acts on $\Z^2$ via $B \in \GL_2(\Z) \cong \Aut(\Z^2)$. That is, writing the elements of $\Z^2$ as (integral) column vectors in $\R^2$,  
the product in $\mathtt{GW}$ is given by 
\[\left( \left(\begin{smallmatrix} x_1 \\ y_1 \end{smallmatrix}\right), z_1 \right) \cdot \left( \left(\begin{smallmatrix} x_2 \\ y_2 \end{smallmatrix}\right), z_2 \right) = \left( \left(\begin{smallmatrix} x_1 \\ y_1 \end{smallmatrix}\right) + B^{z_1}\cdot\left(\begin{smallmatrix} x_2 \\ y_2 \end{smallmatrix}\right), z_1 + z_2 \right).\] 

Now define $\phee : \mathtt{GW} \to \mathtt{GW}$ by setting 
\[\phee\left( \left( \left(\begin{smallmatrix} x \\ y \end{smallmatrix}\right), z \right) \right) = \left( \left(\begin{smallmatrix} -x \\ -y \end{smallmatrix}\right), -z \right).\]
As $B^2 = \id$ and hence $B^{-z} = B^z$ for any $z \in \Z$, it follows that $\phee$ is a homomorphism since 
\begin{align*}
\phee\left( \left( \left(\begin{smallmatrix} x_1 \\ y_1 \end{smallmatrix}\right), z_1 \right) \cdot \left( \left(\begin{smallmatrix} x_2 \\ y_2 \end{smallmatrix}\right), z_2 \right) \right) & = \phee\left(\left( \left(\begin{smallmatrix} x_1 \\ y_1 \end{smallmatrix}\right) + B^{z_1}\cdot\left(\begin{smallmatrix} x_2 \\ y_2 \end{smallmatrix}\right), z_1 + z_2 \right)\right) \\ 
&= \left( -\left(\begin{smallmatrix} x_1 \\ y_1 \end{smallmatrix}\right) - B^{z_1}\cdot\left(\begin{smallmatrix} x_2 \\ y_2 \end{smallmatrix}\right), -z_1 - z_2 \right) \\
& = \left( -\left(\begin{smallmatrix} x_1 \\ y_1 \end{smallmatrix}\right) + B^{-z_1}\cdot\left(\begin{smallmatrix} -x_2 \\ -y_2 \end{smallmatrix}\right), (-z_1) + (-z_2) \right) \\
&= \left( -\left(\begin{smallmatrix} x_1 \\ y_1 \end{smallmatrix}\right), -z_1 \right) \cdot \left( -\left(\begin{smallmatrix} x_2 \\ y_2 \end{smallmatrix}\right), -z_2 \right) \\
& = \phee\left( \left( \left(\begin{smallmatrix} x_1 \\ y_1 \end{smallmatrix}\right), z_1 \right)\right) \cdot \phee\left(\left( \left(\begin{smallmatrix} x_2 \\ y_2 \end{smallmatrix}\right), z_2 \right) \right).
\end{align*}
By construction, the kernel of $\phee$ is trivial and any $\left( \left(\begin{smallmatrix} x \\ y \end{smallmatrix}\right), z \right) \in \Z^2 \rtimes_B \Z$ lies in the image of $\phee$, whence $\phee \in \Aut(\mathtt{GW})$. 

The fact that $\phee$ is not an inner automorphism is immediate since conjugating $\left( \left(\begin{smallmatrix} x \\ y \end{smallmatrix}\right), z \right)$ by any element of $\mathtt{GW}$ fixes the coordinate $z$. Also, $\Fix(\phee)$ is trivial by the very definition of $\mathtt{GW}$ and $\phee$. 

Let us now check that $\Fix(\phee^{\mathrm{ab}})$ is finite. To see this, we observe that $\mathtt{GW}$ admits the following presentation. 
\[\mathtt{GW} \cong \gera{ e_1, e_2, t \, \, \mid \, \,  [e_1,e_2] = 1, \, \, t e_1 t^{-1} = e_1^{-1}, \, \, t e_2 t^{-1} = e_1^b e_2 }.\] 
In the above, we identify the normal subgroup $\Z^2$ with $\gera{e_1,e_2}$, and the quotient $\Z$ is generated by $t$. 
Forcing the generators to commute, (the image of) $e_1$ becomes an involution with a vanishing power. More precisely, 
\begin{align*}
\mathtt{GW}^{\mathrm{ab}} & \cong \gera{ \barra{e_1}, \barra{e_2}, \barra{t} \, \, \mid \, \, [\barra{e_1},\barra{e_2}] = [\barra{e_1},\barra{t}] = [\barra{e_2},\barra{t}] = \barra{e_1}^{2} = \barra{e_1}^{b} = 1} \\
& \cong \begin{cases} C_2 \times \Z^2 & \text{if } b \in 2\N, \\ \Z^2 & \text{otherwise.} \end{cases}
\end{align*}
Moreover, the map induced by $\phee$ on the abelian group $\mathtt{GW}^{\mathrm{ab}}$ simply inverts the powers of its generators. Thus $\phee^{\mathrm{ab}}$ fixes $1$ and $\barra{e_1}$, in case $b$ is even, and only the identity element otherwise. 

Since $\mathtt{GW}$ is an extension of $\Z^2$ by $\Z$, it is (elementary) amenable, finitely generated, and residually finite. In fact, one can recognize $\mathtt{GW}$ geometrically as a $3$-dimensional crystallographic group by a result of Zassenhaus' (see \cite[Theorem~2.1.4]{KarelLNM}) since $t^2$ acts trivially on $e_1$ and $e_2$ by conjugation, which implies that $\gera{e_1,e_2,z^2}$ is a maximal abelian subgroup isomorphic to $\Z^3$ and of index $2$. That $\mathtt{GW}$ has property~$\Ri$ follows from the fact that it admits the infinite dihedral group as a characteristic quotient; cf.~\cite[Proposition~4.9]{KarelSamIrisSolv} for a proof. The proposition follows.

\end{proof}

\begin{rem}
Our construction draws from the ideas of Gon\c{c}alves--Wong in~\cite{DacibergWongCounterexample}, where they give examples of polycyclic groups of exponential growth that \emph{do not} have property~$\Ri$. Our examples differ from theirs in that they consider extensions $\Z^2 \rtimes_A \Z$ with $A \in \SL_2(\Z)$ (instead of $\GL_2(\Z)$) and having eigenvalues of absolute value different from $1$. This allows them to obtain groups without $R_\infty$ and of exponential growth, whereas our extension $\mathtt{GW} = \Z^2 \rtimes_B \Z$ is actually virtually abelian and thus of polynomial growth; see~\cite{WolfGrowth}. 
\end{rem}

\begin{rem} 
In an earlier version of the present paper, the authors mistakenly claimed to obtain an infinite family of groups with the properties prescribed in Proposition~\ref{PropK}. (And with the stronger requirement that $\vert \Fix(\phee) \vert = \vert \Fix(\phee^{\mathrm{ab}}) \vert = 1$.) Although the group $\mathtt{GW}$ depends on a parameter $b \in \N$, the classification of crystallographic groups (see \cite[Chapter~2]{KarelLNM}) implies that there are only finitely many such groups up to isomorphism. (Notice that choosing $b$ even or odd yields nonisomorphic groups.) 

Constructing groups as in Proposition~\ref{PropK} that have non-inner automorphisms with few fixed points --- both in the given group and in its abelianization --- seems a nontrivial matter. Indeed, many soluble groups were shown to have~$\Ri$ by finding infinitely many fixed points in their abelianizations or in characteristic subgroups (cf. \cite{KarelSamIrisSolv,DacibergWongCrelle,PaYu0}). Other candidates of amenable groups with the properties listed in Proposition~\ref{PropK} would be certain branch groups~\cite{BartholdiKaimanovichNekrashevych}, such as the groups of Gupta--Sidki~\cite{FelshtynLeonovTroitsky,GuptaSidki0}. In fortunate cases, a result of Lavreniuk--Nekrashevych shows that the automorphisms of such groups are induced by conjugation by an automorphism of the corresponding regular rooted tree~\cite{LavreniukNekrashevych}. However, it seems often the case that the centralizers of automorphisms of the given trees are infinite.

It is thus unclear to us whether there exist, up to isomorphism, infinitely many finitely generated, residually finite, amenable groups with property~$\Ri$ and admitting non-inner automorphisms with a single fixed point in the given group and in its abelianization.
\end{rem}

All of the previous examples happened in the residually finite (in fact, linear) world. These considerations motivated our present work, namely with the following problems in mind.

\begin{qst} \label{qst:Motivacao}
Do there exist (finitely generated) \emph{non}-residually finite groups with property~$\Ri$ such that \emph{every} outer automorphism $\Phi$ is represented by an element $\phee \in \Phi$ with infinitely many fixed points?
\end{qst}

In view of formulae and bounds relating Reidemeister numbers of a given automorphism to Reidemeister numbers and fixed points of the induced map on a characteristic quotient, one lands on the following version of the previous question. 

\begin{prob} \label{prob:Motivacao2}
Give examples of (finitely generated) \emph{non}-residually finite groups $\Gamma$ with a characteristic quotient $\Gamma/N$ all of whose automorphisms $\barra{\phee}$ induced by $\phee\in\Aut(\Gamma)$ fix infinitely many points.
\end{prob}

Problem~\ref{prob:Motivacao2} has a sibling in the literature. Dekimpe and Gon\c{c}alves initiated the study of groups admitting characteristic quotients all of whose induced automorphisms $\barra{\phee}$ have $R(\barra{\phee})=\infty$; see~\cite{KarelDacibergNilSurf}.

Though we are unable to settle Question~\ref{qst:Motivacao}, it turns out that a group of R.~Thompson partially solves it while also solving Problem~\ref{prob:Motivacao2}. 

Recall that Thompson's $F$ is the group of piecewise-linear (orientation-preserving) self-homeomorphisms of the unit interval $[0,1]$ whose elements $f\in F$ have: finitely many singularities; slopes in the multiplicative subgroup $\gera{2}\leq (\R^\times,\cdot)$; and the singularities lie in $\Z[\frac{1}{2}]$, the ring of dyadic rationals. (We remind the reader that $F$ is finitely presented and `not far' from being simple as $[F,F]$ is simple and $\ab{F}\cong\Z^2$~\cite{CannonFloydParry}.) 
In the following we record a slight refinement of the fact that Thompson's group \(F\) has property~\(\Ri\), which was first proved by Bleak--Fel'shtyn--Gon\c{c}alves~\cite{BleakFelshtynDaciberg}. 

\begin{prop} 
\label{prop:Brin}
Thompson's group $F$ satisfies $\vert \Fix(\ab{\psi}) \vert = \infty$ for any $\psi\in\Aut(F)$ and thus has property~$\Ri$. Moreover, there exist infinitely many outer automorphisms of $F$ of \emph{finite} order, and every $\phee \in \Aut(F)$ of finite order satisfies $\vert \Fix(\phee)\vert = \infty$. 
\end{prop}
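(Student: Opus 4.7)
The plan is to build on Brin's structural description of $\Aut(F)$---which identifies $\Out(F)$ as a finite group and realizes every automorphism of $F$ as conjugation by a piecewise linear homeomorphism of an ambient interval---and then extract each of the three assertions by explicit computation. For the first assertion, note that the map $\Aut(F) \to \Aut(\ab{F}) \cong \GL_2(\Z)$ kills $\mathrm{Inn}(F)$ and hence factors through the finite $\Out(F)$. The standard flip $\tau \in \Aut(F)$, given by $\tau(f)(x) = 1 - f(1-x)$ for $f \in F$, swaps the two natural characters generating $\ab{F} \cong \Z^2$, inducing the matrix $\bigl(\begin{smallmatrix} 0 & 1 \\ 1 & 0 \end{smallmatrix}\bigr) \in \GL_2(\Z)$; I would then invoke Brin's description to verify that the whole image is generated by this matrix. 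Since every element of the image has eigenvalue $1$, it fixes an infinite sublattice of $\Z^2$, yielding $|\Fix(\ab{\psi})| = \infty$ for every $\psi \in \Aut(F)$. Lemmas~\ref{LeminhaDeSempre} and~\ref{OutroLeminhaDeSempre} then give property~$\Ri$ for $F$.

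Next, to produce infinitely many non-inner finite-order automorphisms, I would show that the $\Aut(F)$-conjugacy class of $\tau$ is already infinite. The centralizer of $\tau$ inside $\mathrm{Inn}(F) \cong F$ coincides with the subgroup $\Fix_F(\tau) = \set{f \in F : f(1-x) = 1 - f(x)}$ of flip-symmetric elements, and since $\tau$ acts as the swap on $\ab{F}$, the cosets $\set{x_0^n \Fix_F(\tau)}_{n \in \Z}$ are pairwise distinct; hence $[F : \Fix_F(\tau)] = \infty$. Combined with $|\Out(F)| < \infty$, this forces $[\Aut(F) : C_{\Aut(F)}(\tau)] = \infty$, so $\tau$ has infinitely many distinct $\Aut(F)$-conjugates. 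Each is an involution and each is non-inner, since they all project to $[\tau] \neq 1 \in \Out(F)$.

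For the final assertion, I would argue that every nontrivial finite-order $\phee \in \Aut(F)$ is in fact $\Aut(F)$-conjugate to $\tau$. Under Brin's realization, $\phee$ corresponds to conjugation by a PL homeomorphism $h$ of finite order, but the only finite-order self-homeomorphisms of $[0,1]$ are, up to conjugacy, the identity and the reflection $x \mapsto 1-x$; so $h$ is conjugate to the flip. Consequently $\Fix(\phee) \cong \Fix_F(\tau)$, which is plainly infinite---it contains, for instance, the family obtained by symmetrically extending any element of $F$ supported on $[0, 1/2]$. The main technical obstacle is precisely this last step: one must cite Brin's theorem carefully enough that the conjugacy is realized inside $\Aut(F)$ itself, not merely at the level of $\Out(F)$.
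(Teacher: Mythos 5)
Your first assertion is handled essentially correctly in outcome, but the route you take and the rest of the argument rest on a fatal misremembering of Brin's theorem: \(\Out(F)\) is \emph{not} finite. Brin showed there is an index-two subgroup \(\Aut^+(F)\trianglelefteq\Aut(F)\) fitting into a short exact sequence \(F\hookrightarrow\Aut^+(F)\twoheadrightarrow T\times T\), so (since \(Z(F)=1\) and \(\mathrm{Inn}(F)\cong F\)) the quotient \(\Out(F)\) contains \(T\times T\) as a subgroup of index two; in particular it is infinite. Your observation that the image of \(\Aut(F)\to\GL_2(\Z)\) is generated by the swap matrix is correct, but one sees it by noting that \(T\) is infinite simple with no embedding into \(\GL_2(\Z)\) (e.g.\ because \(F\leq T\) contains \(\Z^2\) while \(\GL_2(\Z)\) is virtually free), so \(T\times T\) maps trivially; the ``\(\Out(F)\) is finite'' shortcut is simply false.

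This error poisons the second step. Your goal there is to produce infinitely many \emph{elements of} \(\Out(F)\) of finite order, but under your premise \(\Out(F)\) would be finite, contradicting the claim outright. Moreover, exhibiting infinitely many \(\Aut(F)\)-conjugates of \(\tau\) does not help: they all map to a single \(\Out(F)\)-conjugacy class (and under your finiteness assumption, to finitely many classes), so they do not yield infinitely many distinct outer automorphisms. The paper's route is different and necessary: since \(T\times T\) sits with finite index in \(\Out(F)\) and \(T\) has infinitely many torsion elements (Ghys--Sergiescu), \(\Out(F)\) has infinitely many finite-order elements.

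The third step also fails. It is not true that every nontrivial finite-order \(\varphi\in\Aut(F)\) is \(\Aut(F)\)-conjugate to the flip \(\tau\): the torsion of \(T\times T\leq\Out(F)\) supplies automorphisms of arbitrarily large finite order, which obviously cannot be conjugate to an involution. The underlying topological claim is also off: Brin does not realize automorphisms of \(F\) as conjugation by homeomorphisms of \([0,1]\) (if he did, your observation that orientation-preserving homeomorphisms of an interval have no nontrivial finite order would imply there are \emph{no} finite-order outer automorphisms beyond \(\tau\), contradicting the statement). The paper's proof instead invokes Kochloukova--Mart\'{i}nez-P\'{e}rez--Nucinkis \cite[Corollary~5.2]{DesCoBriFixed}: for a finite-order \(\varphi\in\Aut(F)\), the fixed subgroup \(\Fix(\varphi)\) either contains a copy of \(F\) or is not finitely generated; either way it is infinite. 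You would need a result of this strength --- a classification of \(\Fix(\varphi)\) for all finite-order \(\varphi\), not just for \(\tau\).
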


\begin{proof}
In a seminal paper, Brin~\cite{BrinChameleon} completely determined $\Aut(F)$. Building upon this, Bleak--Fel'shtyn--Gon\c{c}alves observed that \emph{any} element of $\Aut(F)$ induces a matrix $A \in \GL_2(\Z) \cong \Aut(\ab{F})$ having $1$ as an eigenvalue; cf. the proof of \cite[Theorem~3.3]{BleakFelshtynDaciberg}. Thus $\vert \Fix(\ab{\psi}) \vert = \infty$ for any $\psi\in\Aut(F)$ and, by Lemmas~\ref{LeminhaDeSempre} and~\ref{OutroLeminhaDeSempre}, $F$ has property~$R_\infty$.

Again from Brin's work (see \cite[Theorem~1]{BrinChameleon}), there is a subgroup of index two $\Aut^+(F) \nsgp \Aut(F)$ that fits into a short exact sequence $F\into \Aut^+(F) \onto T\times T$, where $T$ is Thompson's simple group $T$~\cite{CannonFloydParry}. This sequence, in turn, implies that $\Out(F)$ contains a subgroup (of finite index) that contains copies of $T$, which is known to contain infinitely many torsion elements~\cite{GhysSergiescu}. 

Finally, any element $\phee \in \Aut(F)$ of finite order satisfies $\vert \Fix(\phee) \vert = \infty$. This is because $\Fix(\phee)$ contains a copy of $F$ or is not even finitely generated --- for a short proof of this fact we refer the reader to (the proof of) \cite[Corollary~5.2]{DesCoBriFixed}.
\end{proof}

We will see that many groups similar to $F$ also solve Problem~\ref{prob:Motivacao2}, and discuss how Proposition~\ref{prop:Brin} extends to some of them; see Section~\ref{sec:novascoisas}.

\section{Thompson-like Groups} \label{sec:osgrupos}
Thompson groups are those generalizing or resembling Richard Thompson's original trio $F \subset T \subset V$; see~\cite{CannonFloydParry}. Groups in this family are typically finitely presented and not far from being simple, and are prominent for exhibiting peculiar properties~\cite{theMFOreport,CannonFloydParry}.

Motivated by the case of $F$ seen in Section~\ref{sec:newbackground}, here we are interested in the Lodha--Moore groups (cf. Section~\ref{sec:grupos-lm}), 
the braided Thompson group $\Fbr$ 
(cf. Section~\ref{sec:btg}) 
and the Bieri--Strebel groups $G(I;A,P)$ (cf. Section~\ref{sec:BiSt}), which are in a sense `$F$-like groups'. 
The Lodha--Moore groups were the first finitely presented torsion-free counterexamples to the von~Neumann conjecture~\cite{LodhaMoore2016}, while $\Fbr$ serves as an `Artinian version' of $F$~\cite{BBCS}, and the $F$-like Bieri--Strebel groups are natural generalizations of $F$ as piecewise-linear homeomorphisms of intervals~\cite{BieriStrebelPL}. 

In this note we use the usual `left-hand notation' for maps. 
\subsection{The Lodha--Moore Groups}\label{sec:grupos-lm} 
    We consider self-transformations of the Cantor set $2^{\N}$, whose points are infinite binary sequences 
    $\xi=a_0a_1a_2\cdots$ with each digit $a_i \in \{0,1\}$.
    Define the following two functions of \(2^{\mathbb{N}}\). 
    
    \[    
        x(\xi)\coloneqq
        \begin{cases}
            0\eta,\;\text{ if}\;\xi=00\eta, \\
            10\eta,\;\text{ if}\;\xi=01\eta,\\
            11\eta,\;\text{ if}\;\xi=1\eta,
        \end{cases}
    \;\text{ and}\;\quad
        y(\xi)\coloneqq
        \begin{cases}
            0(y(\eta)),\;\text{ if}\;\xi=00\eta, \\
            10(y^{-1}(\eta)),\;\text{ if}\;\xi=01\eta, \\
            11(y(\eta)),\;\text{ if}\;\xi=1\eta.
        \end{cases}
    \]
    One similarly defines \(x^{-1}\) and \(y^{-1}\). Now, 
    given \(s\in 2^{<\mathbb{N}}\), the set of all finite binary sequences, 
    define the following families of maps on $2^{\N}$. 
    \[x_{s}(\xi)\coloneqq\begin{cases}
        s(x(\eta)),\;\text{ if}\;\xi = s\eta, \\
        \xi\;\text{ otherwise,}
    \end{cases}
    \;\text{ and}\quad
    y_{s}(\xi)\coloneqq\begin{cases}
        s(y(\eta)),\;\text{ if}\;\xi = s\eta, \\
        \xi,\;\text{ otherwise.}
    \end{cases}\]
    If $s$ is the empty sequence \o, we set $x_s = x$ and $y_s = y$. Considering \(\mathbb{N}_{0}\coloneqq\left\{0\right\}\cup\mathbb{N}\), 
    the Lodha--Moore groups are the following subgroups of 
    bijections $2^{\N} \to 2^{\N}$: 
    \begin{align*}
       \lmmaior&\coloneqq\langle x_{s}, y_{t} \mid s,t\in 2^{<\mathbb{N}}\rangle\text{,}
        \\
        \lmesq&\coloneqq\langle x_{s}, y_{t} \mid s,t\in 2^{<\mathbb{N}},\;t\notin\left\{1^{n}\right\}_{n\in\mathbb{N}_{0}}\rangle\text{,}
        \\
        \lmdir&\coloneqq\langle x_{s}, y_{t} \mid s,t\in 2^{<\mathbb{N}},\;t\notin\left\{0^{n}\right\}_{n\in\mathbb{N}_{0}}\rangle \text{ and }
        \\
        \lmmenor&\coloneqq\langle x_{s}, y_{t} \mid s,t\in 2^{<\mathbb{N}},\;t\notin\left\{0^{n},\;1^{n}\right\}_{n\in\mathbb{N}_{0}}\rangle\text{.}
    \end{align*}
    Here, \(0^{n}\) and \(1^{n}\) denote constant binary sequences, 
    where \(n\in\mathbb{N}_{0}\). In particular, \(0^{0}\) and \(1^{0}\) also represent the empty sequence~\o. 
    
    For our purposes, we shall need the following defining relators~\cite{LodhaMoore2016} for the larger group $\lmmaior \geq \lmesq, \lmdir, \lmmenor$, indexed by sequences \(s,t\in2^{<\mathbb{N}}\).
	\begin{enumerate}
	    \item[(LM1)] \(x_{s}^{2} = x_{s1}x_{s}x_{s0}\);
	    \item[(LM2)] If \(x_{s}(t)\) is well-defined, then \(x_{s}x_{t}=x_{x_{s}(t)}x_{s}\);
	    \item[(LM3)] If \(x_{s}(t)\) is well-defined, then \(x_{s}y_{t}=y_{x_{s}(t)}x_{s}\);
	    \item[(LM4)] If \(s\in2^{<\mathbb{N}}\) is not a prefix of $t\in2^{<\mathbb{N}}$, nor is \(t\) a prefix of \(s\), then \(y_{s}y_{t}=y_{t}y_{s}\);
	    \item[(LM5)] \(y_{s}=y_{s11}y_{s10}^{-1}y_{s0}x_{s}.\)
    \end{enumerate}
    In the sentence ``\(x_{s}(t)\) is well-defined'' we mean that the finite sequence \(t\) has \(s\) as its prefix and that \(x_{s}\) can act on \(t\) as it does on an infinite binary sequence \(\xi=s\eta\). 
    In order to restrict
    such relations to the other Lodha--Moore groups, 
    one simply restricts which subscripts are used 
    for the \(y_{t}\)-generators. 
    It is as instructive as helpful to check that the four Lodha--Moore groups are in fact finitely generated~\cite{LodhaMoore2016,Zar2016}. 
    
\subsection{The braided version of \texorpdfstring{$F$}{F}}\label{sec:btg} 
Throughout, by a \emph{tree} we mean a finite rooted binary tree. That is, a tree whose vertices have valency $3$ except for the root and the leaves, which have valency $2$ and $1$, respectively. The trivial tree is made of a single node. We fix a numbering on the $n$ leaves of a tree by labeling them from $1$ to $n$ from left to right. If $v$ is not a leaf vertex, it is connected to two vertices $u$ and $w$ that are farther away from the root than $v$. Such a vertex $v$ together with the two edges and their vertices $u,w$ form a \emph{caret}.

A braided paired tree diagram is a triple \((T_{-},b,T_{+})\) consisting of 
trees \(T_{-}\) and \(T_{+}\) both with \(n \in \N\) leaves and an element \(b\) of the 
braid group on \(n\) strings~\(B_n\). Following~\cite{BBCS}, we represent such triples as split-braid-merge diagrams: we draw \(T_{-}\) with its root on the top and the \(n\) leaves at the bottom and \(T_{+}\) with its root at the bottom and its \(n\) leaves at the top, aligned with the leaves of \(T_{+}\) so that the braid element \(b\in B_n\) can be represented between them; see Figure~\ref{expansion}. In accordance with braid diagrams, we regard isotopic diagrams to be equal.

\begin{figure}[ht]
    \centering
        {\caption{A diagram (left) and an expansion of it (right).}\label{expansion}}
        {\includegraphics[width=.45\textwidth]{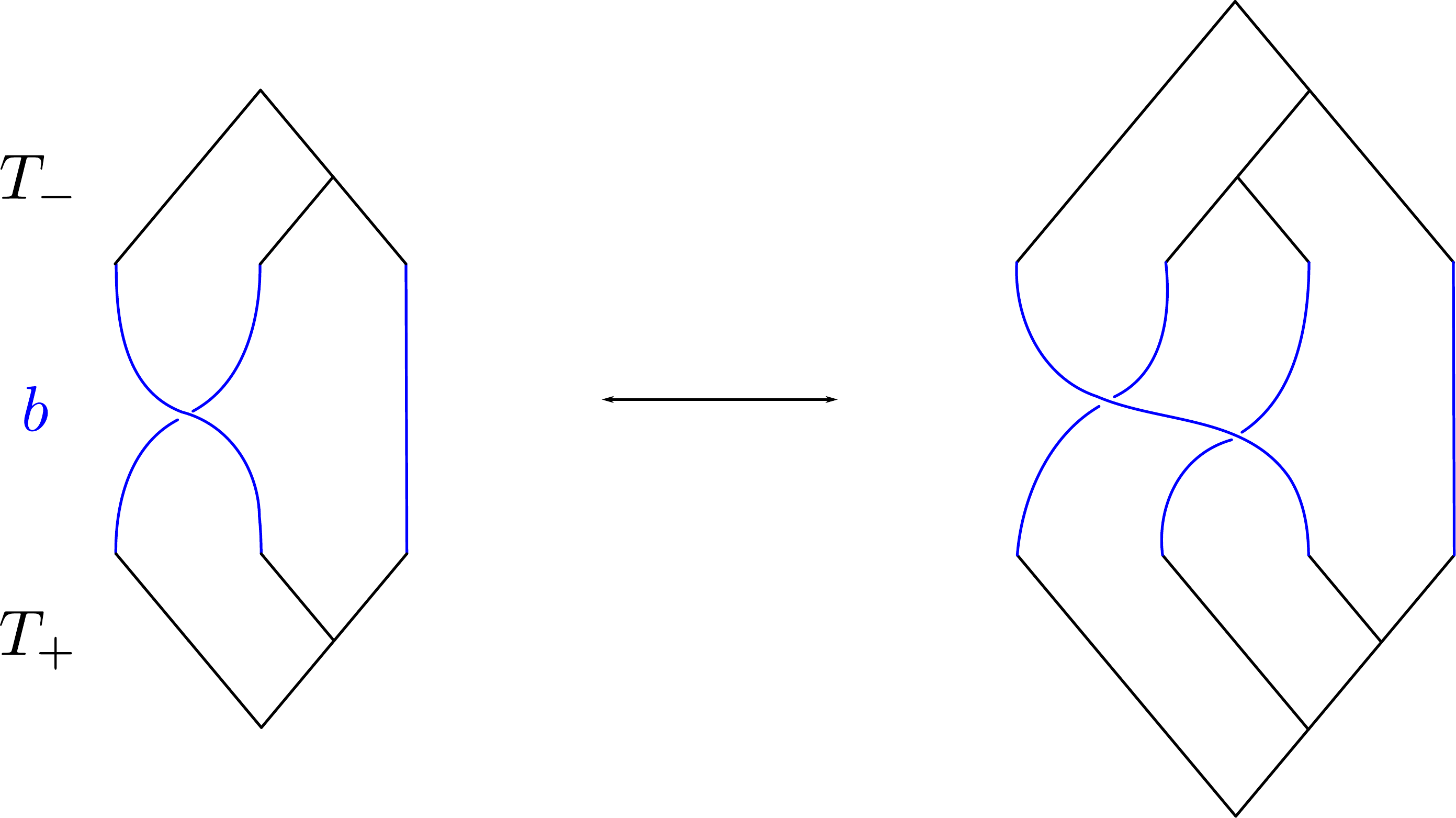}}
\end{figure}

Given a leaf \(\ell_-\) of \(T_-\), let \(\ell_+\) denote the unique leaf of \(T_+\) connected to \(\ell_-\) by a strand \(s_\ell\) of \(b\). 
An \emph{expansion} of \((T_{-},b,T_{+})\) is given as follows: one adds a caret to the leaf \(\ell_-\) and another one to \(\ell_+\), then one bifurcates the strand \(s_\ell\) into two parallel strands; see Figure~\ref{expansion}. A \emph{reduction} is the reverse of an expansion. We refer the reader to~\cite{BBCS} for a more detailed explanation. Two braided paired tree diagrams are \emph{equivalent} if one can be obtained from the other by performing finitely many reductions and expansions. We remark that every such diagram admits a unique reduced representative. 

The set of equivalence classes of braided paired tree diagrams forms a group, denoted $\Vbr$~\cite{BBCS}. Similarly to the strand diagrams of Belk--Matucci~\cite{BelkMatucci}, multiplication in $\Vbr$ works as follows: given
\(\mathcal{T}=(T_{-},b,T_{+})\) and \(\mathcal{R}=(R_{-},b',R_{+}) \in \Vbr\), we obtain \(\mathcal{T}\cdot\mathcal{R}\) by gluing the root of \(T_+\) to the root of \(R_{-}\) and then performing the reduction moves from Figure~\ref{reductionmoves} until reaching a braided paired tree diagram; cf.~\cite[Section~1.1]{BFMWZ}. For an example of multiplication see Figure~\ref{mult}. We stress that, due to Newman's Diamond Lemma (cf.~\cite{rewritting}), the order of reductions does not matter since the corresponding abstract rewriting system is confluent. Recall that a braid $b$ lies in the \emph{pure braid group} $PB_n \leq B_n$ if its induced permutation on $n$ elements is the identity. The braided Thompson group $\Fbr$ is the subgroup of $\Vbr$ whose elements $(T_{-},b,T_{+})$ only have pure braids $b \in PB_n$ in their diagrams. 

\begin{figure}[ht]
    \centering
        {\caption{Reduction moves.}\label{reductionmoves}}
        {\includegraphics[width=.75\textwidth]{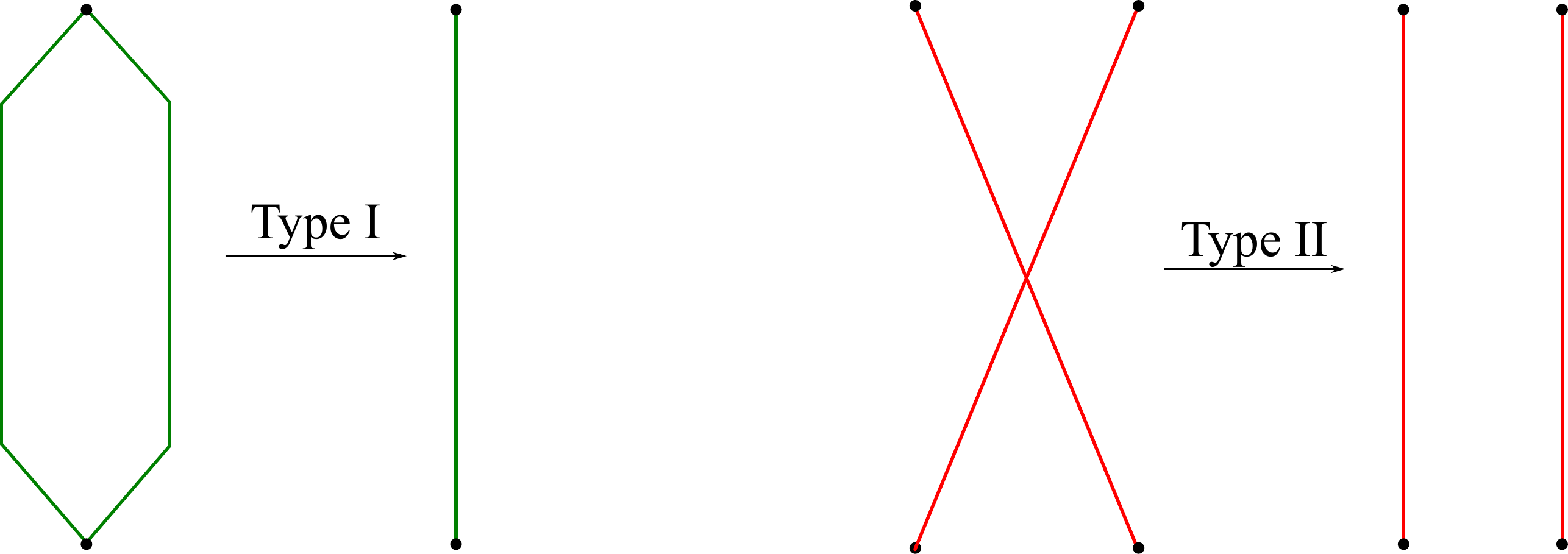}}
\end{figure}

\begin{figure}[h]
    \centering
        {\caption{Multiplication on \(\Vbr\).}\label{mult}}
        {\includegraphics[width=1\textwidth]{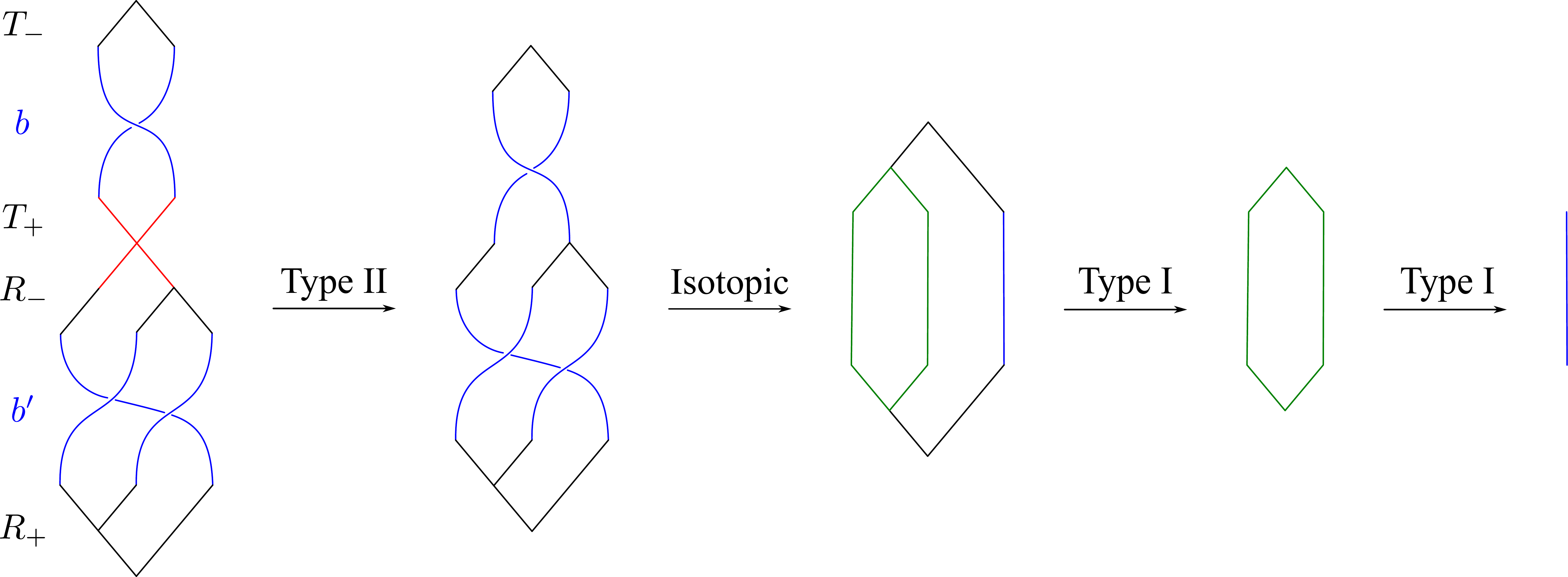}}
\end{figure}

We now recall a finite generating set 
for $\Fbr$. 
First notice that $F$ can be regarded as the subgroup of $\Fbr$ of triples $(T_{-},1, T_{+})$, where $T_{-},T_{+}$ are trees with $n$ leaves and $1$ is the identity element in $PB_n$. 
Denote by $x_0, x_1$ the usual generators of $F \leq \Fbr$. 
Now for each $n\in \N$ denote by $R_n$ the 
right vine with $n$ leaves, i.e., the tree where no caret has a left child. Consider also the elements $A_{ij}^n \in PB_n$, for $i < j$, which wrap the $i$th strand around the $j$th one. 
For $1 \leq i < j$, let\[\alpha_{ij}=(R_{j+1},A_{ij}^{j+1} ,R_{j+1})\text{ and }\beta_{ij}=(R_j ,A_{ij}^j ,R_j).\]  
By \cite[Theorem~6.1]{BBCS}, the group $\Fbr$ is generated by 
\[x_0,~ x_1,~ \alpha_{1,2},~ \alpha_{1,3},~ \alpha_{2,3},~ \alpha_{2,4},~ \beta_{1,2},~ \beta_{1,3},~ \beta_{2,3},~ \beta_{2,4}.\]
\begin{figure}[htbp]
	\centering
		\includegraphics[width=1.\textwidth]{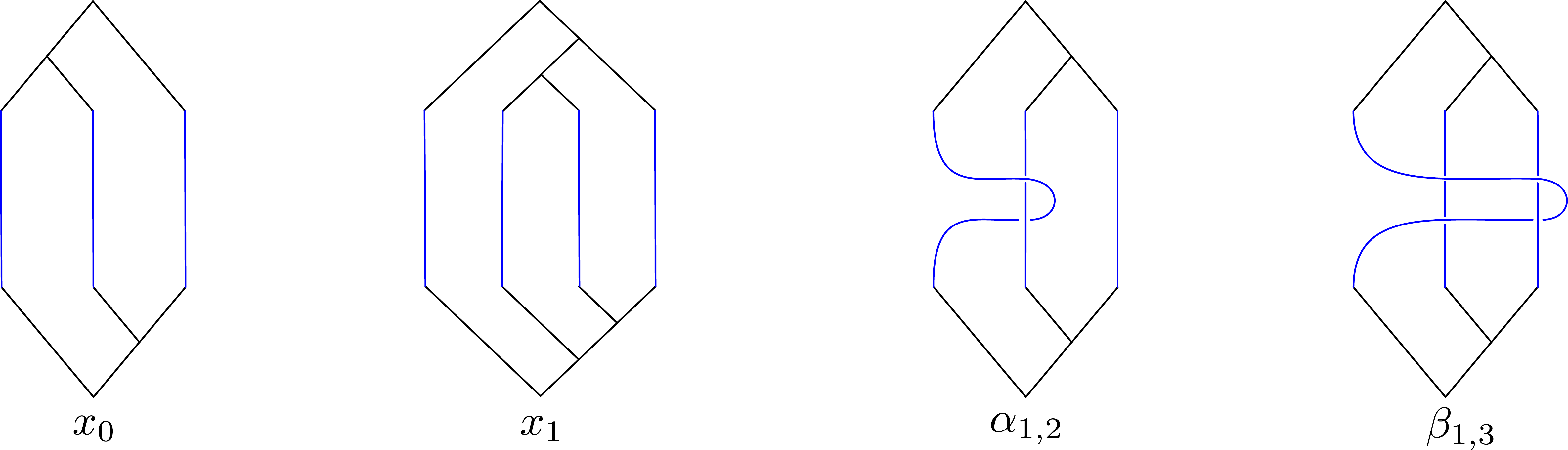}
	\caption{Some generators of \(\Fbr\).}
	\label{fig:generators}
\end{figure}

\subsection{The $F$-like Bieri--Strebel groups}\label{sec:BiSt}

Let \(\mathrm{PL}_{\circ}(\mathbb{R})\) be the group of all orientation-preserving piecewise-linear homeomorphisms of the real line with only finitely many singularities. 

Given a nontrivial additive subgroup $A \leq (\R,+)$, a nontrivial (positive) multiplicative subgroup $P \leq (\R_{> 0}^\times,\cdot)$ such that $P\cdot A \subseteq A$, and a closed interval $[0,\ell] \subset \R$ with $\ell \in A$, the corresponding \(F\)-like \emph{Bieri--Strebel group} is the subgroup
\[ G([0,\ell]; A, P) \leq \mathrm{PL}_\circ(\R)\] 
whose elements $g \in G([0,\ell]; A, P)$ map $A$ to $A$ and have
\begin{itemize}
    \item $\mathrm{supp}(g) \subseteq[0,\ell]$, where $\mathrm{supp}(g) = \{ r \in \R \mid g(r) \neq r \}$ is the support of \(g\);
    \item all its singularities belonging to $A$;
    \item all its slopes lying in $P$.
\end{itemize}

These groups were first studied by Bieri and Strebel~\cite{BieriStrebelPL} in the 1980s as natural generalizations of $F$. (We remark that, in the paper~\cite{DacibergParameshStrebel}, the authors view \(\mathrm{PL}_{\circ}(\mathbb{R})\) as a group of \emph{increasing} homeomorphisms. Our definitions of the $F$-like Bieri--Strebel groups are nevertheless equivalent, for `increasing' and `orientation-preserving' can be interchanged here.)

Since any homeomorphism of a closed interval that fixes its endpoints can be extended to a homeomorphism of the whole real line, one readily detects that the groups $G([0,\ell]; A, P)$ include familiar examples. 
\begin{exm}
\begin{enumerate}
    \item Thompson's $F$ is just $G([0,\ell]; A, P)$ with $\ell =1$, $A = \Z[\frac{1}{2}]$ and $P = \gera{2} = \{ 2^k \mid k\in \Z\} \leq \R^\times_{>0}$. 
    \item The group nowadays known as Stein's group $F_{2,3}$ is simply $F_{2,3} = G([0,1]; \Z[\frac{1}{6}], \gera{2,3})$; cf. \cite{SpahnZaremsky,MelStein}. 
    \item  Using the (small) golden ratio $\tau = \frac{\sqrt{5}-1}{2}$, Cleary constructed the irrational-slope group $F_\tau = G([0,1]; \Z[\tau], \gera{\tau})$; see \cite{BurNucRee,Cleary}. 
\end{enumerate}
\end{exm}

We remark that 
it is still an open problem to classify all finitely generated $F$-like Bieri--Strebel groups; see~\cite{BieriStrebelPL}.

\section{Characters and \texorpdfstring{$\Sigma$-invariants}{Sigma-invariants}}\label{sec:sigma-reid} 

Investigating properties of automorphisms of a group $\Gamma$ usually requires deep knowledge on the full automorphism group $\Aut(\Gamma)$. This was illustrated in Section~\ref{sec:newbackground}, and particularly for Thompson's group $F$ in Proposition~\ref{prop:Brin}. However, it is sometimes possible to obtain qualitative information on $\Aut(\Gamma)$ bypassing an explicit computation of $\Aut(\Gamma)$. We shall take this route with help of (the complement of) the geometric invariant $\Sigma^1(\Gamma)$ of Bieri--Neumann--Strebel~\cite{BNS}.

    A \emph{character} of a group $\Gamma$ is a homomorphism $\chi\colon\Gamma\to\R$, where $\R$ is the additive group of real numbers, and $\chi$ is \emph{discrete}   
    if $\mathrm{Im}(\chi) \subseteq \Z$. 
		
		When $\Gamma$ is finitely generated, its character sphere is defined as 
    \[S(\Gamma)\coloneqq\left(\Hom(\Gamma,\mathbb{R})\setminus \left\{0\right\}\right)/\sim,\]
    where 
    the equivalence relation 
    \(\sim\) is given by
    \[\mu_{1}\sim\mu_{2}\iff\exists\;r\in\mathbb{R}_{>0}\;\text{ such that }\;r\mu_{1}=\mu_{2}.\]
    The equivalence class of a character $\mu$ is denoted by $[\mu]$, and the invariant 
    \(\Sigma^{1}(\Gamma)\subseteq S(\Gamma)\) of the group $\Gamma$ is then defined 
    as
    \[\Sigma^{1}(\Gamma)\coloneqq\left\{\left[\mu\right]\mid \mathrm{Cay}(\Gamma)_{\mu\geqslant0}\;\text{ is connected}\right\},\]
    where \(\mathrm{Cay}(\Gamma)\) is the Cayley graph of 
    \(\Gamma\) using \emph{some} finite generating set of~$\Gamma$. Here,  
    \(\mathrm{Cay}(\Gamma)_{\mu\geqslant0}\) is the full  
    subgraph of $\mathrm{Cay}(\Gamma)$ whose vertices are mapped
    to nonnegative real numbers by \(\mu\). 
    In practice, it is often more convenient to work with the \emph{complement} of \(\Sigma^1\), defined by 
    \[\Sigma^{1}(\Gamma)^{c} = S(\Gamma)\setminus \Sigma^{1}(\Gamma);\]
    we refer the reader to Strebel's notes~\cite{StrebelNotes} for more on $\Sigma^1$ and its complement. 
An important feature is that $\Sigma^1(\Gamma)$ and $\Sigma^1(\Gamma)^c$ do \emph{not} depend on the generating set for $\Gamma$ (cf. \cite{BNS}), which is why we omit this in the notation for the Cayley graph \(\mathrm{Cay}(\Gamma)\). 
    
\subsection{Characters of Thompson-like groups} \label{sec:charGammai}
Though computing $\Sigma^1$ can be challenging in general, Zaremsky observed in~\cite{Zar2016} and~\cite{braidedVZaremskyNormal} that certain characters of $G$, $\lmesq$, $\lmdir$, $\lmmaior$ and $\Fbr$, closely related to two well-known characters of Thompson's $F$, are particularly important. 

From now on, we adopt the following notation. 
\[\Gamma_0=\Fbr, \quad \Gamma_1=G, \quad \Gamma_2=\lmesq, \quad \Gamma_3=\lmdir, \quad \text{and} \quad \Gamma_4=\lmmaior.\]
 In~\cite{Zar2016}, the following \emph{discrete} characters of the $\Gamma_i$ are considered. 
\begin{alignat*}{3}
\chi_0\colon&~\Gamma_i \to \Z, \quad \text{for } i=1,3,  &\quad \chi_1\colon&~\Gamma_i \to \Z,\quad \text{for }i=1,2,\\ \medskip
w\mapsto &
\begin{dcases}
-1,\;\text{ if}\;w=x_{0^{n}},\; n\in\mathbb{N}_{0}, \\
        0,\;\text{ otherwise,}
\end{dcases}
& w\mapsto 
&\begin{dcases}
1,\;\text{ if}\;w=x_{1^{n}},\; n\in\mathbb{N}_{0}, \\
        0,\;\text{ otherwise,}
\end{dcases}
\end{alignat*}

\begin{alignat*}{3}
\psi_0\colon&~\Gamma_i \to \Z,\quad \text{for }i=2,4,  &\quad \psi_1\colon&~\Gamma_i \to \Z,\quad \text{for }i=3,4,\\ \medskip
w\mapsto &
\begin{dcases}
    1,\;\text{ if}\;w=y_{0^{n}},\; n\in\mathbb{N}_{0}, \\
    0,\;\text{ otherwise,}
\end{dcases}
& w\mapsto 
&\begin{dcases}
 1,\;\text{ if}\;w=y_{1^{n}},\; n\in\mathbb{N}_{0}, \\
        0,\;\text{ otherwise.}
\end{dcases}
\end{alignat*}

Turning to $\Gamma_0 = \Fbr$, we recall the following two discrete characters from~\cite{braidedVZaremskyNormal}. 
Given a tree $T$, considered as a metric graph with edge lengths all equal to $1$, denote by $L(T)$ the length of the shortest path from the root of $T$ to its leftmost leaf. Similarly, denote by $R(T)$ the length of the shortest path from the root of $T$ to its rightmost leaf. Define \(\varphi_0, \varphi_1\colon \Gamma_0 \to \Z\) by 
\[\varphi_0(T_{-}, p, T_{+}) = L(T_{+}) - L(T_{-}) \quad \text{and}  \quad \varphi_1(T_{-}, p, T_{+}) = R(T_{+}) - R(T_{-}).\] 

\begin{thm}[{\cite[Theorem~4.5]{Zar2016} and \cite[Theorem~3.4]{braidedVZaremskyNormal}}] \label{thm:BNS-Gammai} The complement $\Sigma^1(\Gamma_i)^c$ of the $\Sigma$-invariant of $\Gamma_i$ equals $P_i \subset S(\Gamma_i)$, with $P_i$ as follows. 
\begin{center}
		\begin{tabular}{ c | c | c | c | c | c}
						& $i=0$ 	& $i=1$ & $i=2$ & $i=3$ & $i=4$ \\ \hline
			$\Gamma_i$	& $\Fbr$	& $G$		&	$\lmesq$ & $\lmdir$ & $\lmmaior$\\ \hline
			$P_i$ & $\{[\varphi_0],[\varphi_1]\}$ & $\{[\chi_{0}],[\chi_{1}]\}$ & $\{[\psi_{0}],[\chi_{1}]\}$ & $\{[\chi_{0}],[-\psi_{1}]\}$ & $\{[\psi_{0}],[-\psi_{1}]\}$
		\end{tabular}
\end{center}  
\end{thm}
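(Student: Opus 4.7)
The plan is to follow the approach pioneered by Zaremsky in \cite{Zar2016,braidedVZaremskyNormal}, which adapts the Bieri--Neumann--Strebel methodology to the Thompson-like setting. First I would compute (or read off from the literature) the abelianizations $\ab{\Gamma_i}$, all of which are free abelian of small rank. This determines $S(\Gamma_i)$ as a low-dimensional sphere and shows that the characters $\chi_j, \psi_j, \varphi_j$ listed in Section~\ref{sec:charGammai} already span $\Hom(\Gamma_i,\R)$; one then verifies that no two distinct characters among those listed in $P_i$ are positively proportional. This reduces the statement to checking finitely many equivalence classes of characters.

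Next I would establish the inclusion $P_i \subseteq \Sigma^1(\Gamma_i)^c$. For each $[\mu] \in P_i$, the strategy is to exhibit a proper, $\mu$-invariant obstruction to connectivity of $\mathrm{Cay}(\Gamma_i)_{\mu \geq 0}$. Concretely, one identifies a distinguished generator $g_\mu$ (such as $x_{0^n}, x_{1^n}, y_{0^n}, y_{1^n}$, or the carets at the extreme leaves in the braided case) whose support is confined to one end of the underlying tree/interval/Cantor set. Because any path in the Cayley graph from $g_\mu$ back to $1$ must, at some point, use an element of $\mu$-negative value (as the action on that ``end'' cannot be reversed while staying $\mu$-nonnegative), the positive subgraph disconnects, placing $[\mu]$ in $\Sigma^1(\Gamma_i)^c$.

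For the reverse inclusion $\Sigma^1(\Gamma_i)^c \subseteq P_i$, I would prove that every $[\mu] \notin P_i$ actually lies in $\Sigma^1(\Gamma_i)$. The tool of choice is a Brown--Bestvina--Brady--style combinatorial Morse argument applied to a suitable contractible CW complex on which $\Gamma_i$ acts: the Stein--Farley--type complex of \cite{BBCS} (or the variant in \cite{BFMWZ}) for $\Gamma_0 = \Fbr$, Stein's complex adapted in \cite{DacibergParameshStrebel} for $\Gamma_1 = G$, and the tree-pair models used in \cite{LodhaMoore2016,Zar2016} for the Lodha--Moore groups. By computing the ascending (or descending) links of vertices relative to the $\mu$-height function and showing they are nonempty and connected whenever $[\mu] \notin P_i$, one transfers connectivity from the complex to the Cayley graph and concludes $[\mu] \in \Sigma^1(\Gamma_i)$.

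The main obstacle is this final link-connectivity analysis: each of the five groups requires its own combinatorial model, its own notion of expansion/reduction, and a case-by-case verification that ``non-extremal'' characters yield connected links. These verifications are precisely what Zaremsky carries out in \cite[Theorem~4.5]{Zar2016} and \cite[Theorem~3.4]{braidedVZaremskyNormal}, so in practice I would simply quote those theorems; the sketch above merely outlines how one would reconstruct them.
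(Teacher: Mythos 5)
The paper does not prove this theorem at all: it is stated purely as a citation of Zaremsky's results, with no proof environment following. Your sketch of the Morse-theoretic strategy on Stein--Farley--type complexes is a fair high-level reconstruction of how Zaremsky actually proceeds, but for the purposes of this paper the correct move is exactly what you do in your final paragraph --- quote \cite[Theorem~4.5]{Zar2016} and \cite[Theorem~3.4]{braidedVZaremskyNormal} --- so your proposal and the paper's treatment agree.
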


The $\Sigma$-invariant of the $F$-like Bieri--Strebel groups $G([0,\ell];A,P)$ has been partially studied in the monograph~\cite{BieriStrebelPL}, though we will not make use of them here. Instead, we shall need the following result due to Gon\c{c}alves--Sankaran--Strebel, which was also obtained by making heavy use of the $\Sigma$-invariant.

\begin{thm}[{\cite[Theorem~1.4]{DacibergParameshStrebel}}] \label{thm:GNT}
For any $F$-like Bieri--Strebel group $G([0,\ell];A,P)$ there exists a nontrivial homomorphism $f\colon G([0,\ell];A,P) \to \R$ such that $f \circ \phee = f$ for any $\phee \in \Aut(G([0,\ell];A,P))$.
\end{thm}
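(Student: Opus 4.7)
The plan is to combine the natural slope-at-endpoint characters of $\Gamma \coloneqq G([0,\ell]; A, P)$ with the $\Aut(\Gamma)$-equivariance of the BNS invariant $\Sigma^1$. Concretely, I would consider the two discrete characters
\[\lambda_0(g) \coloneqq \log g'(0^+) \quad \text{and} \quad \lambda_\ell(g) \coloneqq -\log g'(\ell^-),\]
each with image contained in the additive subgroup $\log(P) \leq \R$. The first step is to invoke the $\Sigma^1$-computation for PL-groups from~\cite{BieriStrebelPL}: one has $\Sigma^1(\Gamma)^c = \set{[\lambda_0],[\lambda_\ell]} \subseteq S(\Gamma)$, a finite (in fact two-element) set of distinct classes, so $\lambda_0$ and $\lambda_\ell$ are linearly independent in $\Hom(\Gamma,\R)$.

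Since $\Sigma^1$ is an automorphism invariant --- the natural action of $\Aut(\Gamma)$ on $S(\Gamma)$ given by $[\chi] \mapsto [\chi \circ \phee]$ preserves both $\Sigma^1(\Gamma)$ and its complement --- every $\phee \in \Aut(\Gamma)$ permutes the pair of classes $\set{[\lambda_0],[\lambda_\ell]}$. I would next upgrade this setwise invariance to a genuine equality of characters, by showing that any positive-real rescaling factor relating $\lambda_i \circ \phee$ to some $\lambda_j$ must be trivial. The point is that $\lambda_i \circ \phee$ has image in the fixed subgroup $\log(P) \leq \R$, so a scaling factor $r > 0$ realising the permutation would have to satisfy $r \cdot \log(P) = \log(P)$; when $\log(P)$ is finitely generated with $\Q$-linearly independent generators, the only positive real with this property is $r=1$.

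Once $\Aut(\Gamma)$ is shown to permute the pair $\set{\lambda_0,\lambda_\ell}$ as actual characters (not merely as rays in $S(\Gamma)$), the homomorphism $f \coloneqq \lambda_0 + \lambda_\ell$ satisfies $f \circ \phee = f$ for every $\phee \in \Aut(\Gamma)$, and is nontrivial thanks to the linear independence established in the first step (regardless of whether $\phee$ fixes each $\lambda_i$ or swaps them, the sum is invariant). The main obstacle is the first step: the explicit determination of $\Sigma^1(\Gamma)^c$ for general admissible parameters $(A,P,\ell)$, which requires a detailed analysis of PL-homeomorphism groups along the lines of the BNS monograph. The rescaling argument is a secondary subtlety, but for the finitely generated $F$-like groups of interest (such as $F$, Stein's $F_{2,3}$, and Cleary's $F_\tau$) the image $\log(P)$ is a finitely generated subgroup of $\R$ and no pathological rescalings occur.
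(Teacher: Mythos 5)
First, note that the paper does not prove this statement at all: it is imported verbatim from Gon\c{c}alves--Sankaran--Strebel \cite{DacibergParameshStrebel}, so your proposal is really being measured against their argument. As a proof it has a fatal gap at exactly the place you flag as ``the main obstacle''. The entire argument rests on the claim that $\Sigma^1(\Gamma)^c=\set{[\lambda_0],[\lambda_\ell]}$ for every $F$-like Bieri--Strebel group, and no such computation exists: the monograph \cite{BieriStrebelPL} only determines $\Sigma^1$ of these groups partially (the paper says so explicitly in Section~\ref{sec:sigma-reid}, and this is precisely why the authors quote Theorem~1.4 of \cite{DacibergParameshStrebel} instead of running for $G([0,\ell];A,P)$ the $\Sigma^1$-argument they use for the Lodha--Moore groups and $\Fbr$). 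Worse, $G([0,\ell];A,P)$ need not be finitely generated --- the classification of the finitely generated ones is open --- so the Cayley-graph definition of $\Sigma^1$ does not even apply to all the groups the theorem covers. What Gon\c{c}alves--Sankaran--Strebel actually use is the realization of every automorphism of $G([0,\ell];A,P)$ as conjugation by a (possibly orientation-reversing) homeomorphism, going back to \cite{BieriStrebelPL}, which controls directly how the endpoint-slope characters transform; they combine this with partial $\Sigma^1$-information rather than a full determination of $\Sigma^1(\Gamma)^c$.

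Two further steps are wrong as stated. The rescaling claim fails whenever $\log(P)$ has rank at least $2$: a positive real $r\neq 1$ can satisfy $r\cdot\log(P)=\log(P)$ even when the generators are $\Q$-linearly independent (e.g.\ $\Z+\Z\sqrt{2}$ is preserved by multiplication by $1+\sqrt{2}$), and the theorem is asserted for arbitrary $P$, so ``no pathological rescalings occur'' needs a proof, not a remark. Finally, your sign convention sabotages the conclusion. With $\lambda_\ell(g)=-\log g'(\ell^-)$, the class $[\lambda_\ell]$ is the \emph{antipode} of the endpoint character that actually lies in $\Sigma^1(\Gamma)^c$ (compare $\chi_1$ in Section~\ref{sec:charGammai}, which for Thompson's $F$ is $+\log_2 g'(1^-)$), so $\set{[\lambda_0],[\lambda_\ell]}$ is not the $\Aut$-invariant set you need; and the sum $f=\lambda_0+\lambda_\ell=\log g'(0^+)-\log g'(\ell^-)$ is \emph{negated}, not fixed, by an orientation-reversing automorphism such as conjugation of $F$ by $x\mapsto 1-x$. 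The correct invariant character is $\log g'(0^+)+\log g'(\ell^-)$.
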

\section{Main results and proofs} \label{sec:novascoisas}

The main technical result of the present note is the following theorem. Since it has not appeared before in the literature (neither explicitly nor in the version stated below), we provide a detailed proof. It generalizes the core idea from~\cite{GonDesi2010} (see also \cite{DacibergParameshStrebel}) --- the main difference is that they do not lift back to the abelianization to construct stabilized cosets as we do here. 

\begin{thm} \label{thm:tecnico}
Let $G$ be a finitely generated group and let $\phee \in \Aut(G)$. Suppose there is a nontrivial $f \in \Hom(G,A)$ with $A$ abelian and $f(G)$ containing an element of infinite order. Assume further that there exists a $\phee$-invariant $N \nsgp G$ contained in $\ker(f)$, and let 
\[ \barra{\phee} \in \Aut(G/N), \, gN \mapsto \phee(g)N,\]
\[ \barra{f} \in \Hom(G/N, A), \, gN \mapsto f(g)\]
be the maps from $G/N$ induced by $\phee$ and $f$, respectively. In the above notation, if $\barra{f} \circ \barra{\phee} = \barra{f}$, then 
$|\Fix(\ab{\phee})|=\infty$.
\end{thm}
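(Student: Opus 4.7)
The strategy is to reduce everything to a rank computation on the abelianization $\ab{G}$ and then upgrade ``infinitely many fixed points modulo torsion'' to ``infinitely many fixed points in $\ab{G}$'' via a splitting. The first step is to unwind the hypothesis $\barra{f} \circ \barra{\phee} = \barra{f}$ back to $G$ itself: by the definitions $\barra{f}(gN) = f(g)$ and $\barra{\phee}(gN) = \phee(g)N$, this equality is precisely the statement that $f \circ \phee = f$ on $G$. Since $A$ is abelian, $f$ factors uniquely through the canonical projection $\pi \colon G \onto \ab{G}$, yielding $\ab{f} \colon \ab{G} \to A$ with $\ab{f} \circ \ab{\phee} = \ab{f}$ and image $\ab{f}(\ab{G}) = f(G)$ still containing an element of infinite order. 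From this point on, the subgroup $N$ plays no further role.

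Next, I would exploit the structure theorem for finitely generated abelian groups. The (finite) torsion subgroup $T \leq \ab{G}$ is characteristic, so $\ab{\phee}$ descends to an automorphism $\til{\phee} \in \GL_r(\Z)$ of $\ab{G}/T \cong \Z^r$, with $r \geq 1$ since $\ab{f}(\ab{G})$ contains an element of infinite order. Composing $\ab{f}$ with the canonical map $A \onto A/A_{\mathrm{tors}}$ annihilates $T$ and produces $\til{h} \colon \Z^r \to A/A_{\mathrm{tors}}$ satisfying $\til{h} \circ \til{\phee} = \til{h}$ whose image still contains an infinite-order element; hence $\til{h}$ is nonzero after tensoring with $\Q$, so $\ker(\til{h}) \leq \Z^r$ has $\Z$-rank at most $r-1$. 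Since $\mathrm{Im}(\til{\phee} - \id) \subseteq \ker(\til{h})$, rank-nullity over $\Q$ forces $\mathrm{rank}(\ker(\til{\phee} - \id)) \geq 1$, whence $\Fix(\til{\phee})$ is infinite.

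Finally, I would lift fixed points from $\Z^r$ back to $\ab{G}$. Since $\Z^r$ is free abelian, the short exact sequence $0 \to T \to \ab{G} \to \Z^r \to 0$ splits; fix a section $\sigma \colon \Z^r \to \ab{G}$. For $y \in \Fix(\til{\phee})$, the element $\delta(y) \coloneqq \ab{\phee}(\sigma(y)) - \sigma(y)$ projects to $\til{\phee}(y) - y = 0$, hence lies in $T$; since $\sigma$ and $\ab{\phee}$ are homomorphisms, so is $\delta \colon \Fix(\til{\phee}) \to T$. Because $T$ is finite, the kernel $K$ of $\delta$ has finite index in the infinite group $\Fix(\til{\phee})$, hence is itself infinite, and by construction $\sigma(K) \subseteq \Fix(\ab{\phee})$. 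Injectivity of $\sigma$ then yields $|\Fix(\ab{\phee})| = \infty$. I expect the main technicality to be the bookkeeping around the torsion of $\ab{G}$ and of $A$ in this last lifting step; the rank argument over $\Q$ is the real engine of the proof.
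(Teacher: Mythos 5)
Your proposal is correct. Every step checks out: the hypothesis $\barra{f}\circ\barra{\phee}=\barra{f}$ does unwind to $f\circ\phee=f$ on $G$ (so $N$ is indeed a red herring), the rank--nullity argument on the free part $\ab{G}/T\cong\Z^r$ is valid (the key points being that an infinite-order element of $A$ survives in $A/A_{\mathrm{tors}}$, that $\ker(\til{h})$ has rank at most $r-1$ because $\Z^r/\ker(\til{h})$ embeds in a torsion-free group, and that $\mathrm{Im}(\til{\phee}-\id)\subseteq\ker(\til{h})$), and the final lifting through the finite torsion subgroup via a section and the homomorphism $\delta$ is clean. Your route differs from the paper's: the paper never touches the torsion of $\ab{G}$ directly, but instead passes to the quotient $(G/N)/\ker(\barra{f})$, exhibits there a fixed element of infinite order, and then bounces back and forth between fixed-point sets and Reidemeister numbers using the cited lemmas for finitely generated abelian groups (Lemma~\ref{LeminhaDeSempre}) and for invariant quotients (Lemma~\ref{OutroLeminhaDeSempre}). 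What the paper's argument buys is brevity, by black-boxing the linear algebra inside those lemmas; what yours buys is self-containedness --- you essentially reprove the relevant case of Lemma~\ref{LeminhaDeSempre} inline via the determinant/rank criterion, and you never need Reidemeister numbers at all, which arguably makes the fixed-point conclusion more transparent. Both arguments rest on the same engine: an $\Aut$-invariant character with infinite image forces the eigenvalue $1$ on the free part of the abelianization.
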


\begin{proof}
It suffices to show that $R(\ab{\phee})=\infty$ and Lemma~\ref{LeminhaDeSempre} will assure that $|\Fix(\ab{\phee})|=\infty$. Our proof idea is to construct a $\ab{\phee}$-invariant subgroup $M$ of $\ab{G}$ such that the induced automorphism $\barra{\ab{\phee}}$ on $\ab{G}/M$ satisfies $R(\barra{\ab{\phee}})=\infty$. Then, Lemma~\ref{OutroLeminhaDeSempre} will assure that 
\[R(\ab{\phee})\geq R(\barra{\ab{\phee}})=\infty.\]

The hypothesis $\barra{f} \circ \barra{\phee} = \barra{f}$ guarantees that $\ker(\barra{f})$ is $\barra{\phee}$-invariant, which in turn assures that $\barra{\phee}$ induces an automorphism 
\[\barrabarra{\phee}\colon \frac{G/N}{\ker(\barra{f})} \longrightarrow \frac{G/N}{\ker(\barra{f})}.\]
For simplicity, write $\barrabarra{G}=\frac{G/N}{\ker(\barra{f})}$ and $\barrabarra{g}=(gN)\ker(\barra{f}) \in \barrabarra{G}$. 

Since $\barra{f}$ has abelian image, one has that $\barrabarra{G}$ itself is a quotient of $\ab{G}$ via the natural projection 
\begin{align*}
    p \colon G/[G,G] &\longrightarrow \barrabarra{G} \\
 x[G,G] &\longmapsto \barrabarra{x}.
\end{align*}
We shall take $\ker(p)$ as the subgroup $M \leq \ab{G}$ mentioned above. 

First, we need to check that $\ker(p)$ is $\ab{\phee}$-invariant. This is equivalent to showing that any element $x[G,G] \in \ker(p)$ satisfies 
\[p(\ab{\phee}(x[G,G]))=\barrabarra{1}.\]
Since $p(\ab{\phee}(x[G,G]))=\barrabarra{\phee(x)}$, we need to prove that $\barra{\phee}(xN) \in \ker(\barra{f})$, which is a direct consequence of the assumption $\barra{f} \circ \barra{\phee} = \barra{f}$. 

It is left to verify that $R(\barra{\ab{\phee}})=\infty$, where $\barra{\ab{\phee}}$ is the automorphism induced by $\ab{\phee}$ on the quotient $\ab{G}/\ker(p)$. 
Notice that $\ab{G}/\ker(p)$ is isomorphic to $\barrabarra{G}$ and that $\ab{\phee}$ and $\barrabarra{\phee}$ are the same automorphism. Thus, we need only prove that $R(\barrabarra{\phee})=\infty$, which is equivalent to showing that $\barrabarra{\phee}$ has infinitely many fixed points by Lemma~\ref{LeminhaDeSempre}. 

Let $g \in G$ be an element such that $f(g)$ is of infinite order in~$A$. Let us check that $\barrabarra{g}$ has infinite order in $\barrabarra{G}$ and $\barrabarra{\phee}(\barrabarra{g})=\barrabarra{g}$. We shall write $A$ additively.

In fact, $\barrabarra{g}^n=\barrabarra{1}$ is equivalent to $g^nN \in \ker(\barra{f})$. This means that 
\[0=\barra{f}(g^nN)=f(g^n)=nf(g).\]
The fact that $f(g)$ has infinite order implies that $n=0$. 

Finally, $\barrabarra\phee(\barrabarra{g})=\barra{\phee}(gN)\ker(\barra{f})$ can only coincide with $\barrabarra{g}=(gN)\ker(\barra{f})$ if $\barra{\phee}(gN)g^{-1}N \in \ker(\barra{f})$. This is the case because 
\[\barra{f}(\barra{\phee}(g)g^{-1}N)=\barra{f}\circ\barra{\phee}(gN)-\barra{f}(gN),\]
and since $\barra{f} \circ \barra{\phee} = \barra{f}$, this means that $\barra{f}(\barra{\phee}(g)g^{-1}N)=\barrabarra{1}$.
\end{proof}
\begin{rem} \label{obs:fixedpointsinGab}
The conclusion of Theorem~\ref{thm:tecnico} is equivalent to the following statement: there exists $g \in G$ such that $g^n \notin [G,G]$ for any $n \in \Z\setminus\set{0}$ and the set $g[G,G]$ is $\phee$-invariant. Indeed, if the statement above holds, then the induced automorphism $\ab{\phee}$ fixes $\Z\cong \gera{g[G,G]} \leq \ab{G}$ pointwise. Conversely, if $|\Fix(\ab{\phee})| = \infty$, then because $\ab{G}$ is finitely generated abelian, there must exist an element $g[G,G] \in \ab{G}$ of infinite order fixed by $\ab{\phee}$. 
Thus 
\[g[G,G] = \ab{\phee}(g[G,G]) \overset{\text{\scriptsize Def.}}{=} \phee(g)[G,G] = \phee(g[G,G])\]
since the commutator subgroup $[G,G]$ is characteristic. 
\end{rem}

As we have seen in Proposition~\ref{prop:Brin}, automorphisms of Thompson's group $F$ fix infinitely many points in the abelianization. 
Putting (the conclusion of) Theorem~\ref{thm:tecnico} further into perspective, consider the following.

\begin{exm} There exist infinitely many Dedekind domains of $S$-arithmetic type $\mc{O}_S$ for which the metabelian groups
\[ \left( \begin{smallmatrix} * & * \\ 0 & * \end{smallmatrix} \right) \leq \mathbb{P}\mathrm{GL}_2(\mc{O}_S) \]
are finitely presented and such that all their automorphisms $\phee$ have $\Fix(\ab{\phee})$ infinite; cf.~\cite{PaYu0}. 
\end{exm}

In fact, besides the above example and the family $\mc{F}$ to be discussed in Section~\ref{sec:ProofsThompson}, there are uncountably many finitely generated groups to which Theorem~\ref{thm:tecnico} applies; cf.\ Remark~\ref{obs:uncount}.

\subsection{Applications} \label{sec:applications}

As mentioned in Section~\ref{sec:newbackground}, the study of Reidemeister numbers has its roots in fixed-point theory in different areas. Our technical Theorem~\ref{thm:tecnico} has the following interpretation as a fixed-point result in group cohomology. 

By a \emph{trivial} $G$-module $M$ we mean a $\Z[G]$-module such that the elements of $G$ act as the identity on $M$. 
A \emph{global fixed point} in a group action $H \curvearrowright X$ is an element $x\in X$ such that $h(x)=x$ for all $h \in H$.

\begin{thm} \label{thm:tecnicohomology}
Let $G$ be finitely generated. Suppose there exists a torsion-free trivial $G$-module $M$ and a \emph{nonzero} global fixed point $[c]\in H^1(G,M)$ under the canonical action $\Aut(G) \curvearrowright H^1(G,M)$. Then $G$ has property~$\Ri$.
\end{thm}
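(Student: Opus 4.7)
The plan is to reduce the statement directly to the technical Theorem~\ref{thm:tecnico} via the standard interpretation of first cohomology with trivial coefficients. Since $M$ is a trivial $G$-module, the universal coefficient theorem (or a direct cocycle computation) identifies
\[ H^1(G,M) \;\cong\; \Hom(G,M) \;\cong\; \Hom(\ab{G},M),\]
under which the canonical $\Aut(G)$-action becomes precomposition: $\phee\cdot c = c\circ\phee^{-1}$. Hence a nonzero global fixed point yields a nontrivial homomorphism $c\colon G\to M$ satisfying $c\circ\phee = c$ for every $\phee\in\Aut(G)$.

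Next I would fix an arbitrary $\phee\in\Aut(G)$ and verify the hypotheses of Theorem~\ref{thm:tecnico} with the data $f=c$, $A=M$, and $N=\ker(c)$. First, since $c$ is nontrivial and $M$ is torsion-free abelian, $c(G)\leq M$ is a nontrivial torsion-free subgroup, so any nonzero element in its image has infinite order. Second, the normal subgroup $N=\ker(c)$ is actually characteristic in $G$: because $c\circ\phee = c$ for every automorphism, $\phee(\ker c)\subseteq \ker c$ and symmetrically $\phee^{-1}(\ker c)\subseteq \ker c$, giving $\phee$-invariance of $N$ for our chosen $\phee$. Third, the induced map $\barra{c}\colon G/N\to M$ satisfies $\barra{c}\circ\barra{\phee}=\barra{c}$ immediately from $c\circ\phee=c$.

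Applying Theorem~\ref{thm:tecnico} now yields $|\Fix(\ab{\phee})|=\infty$; by Lemma~\ref{LeminhaDeSempre} applied to the finitely generated abelian group $\ab{G}$ this gives $R(\ab{\phee})=\infty$, and Lemma~\ref{OutroLeminhaDeSempre} applied to the characteristic quotient $\ab{G}=G/[G,G]$ promotes this to $R(\phee)\geq R(\ab{\phee})=\infty$. Since $\phee\in\Aut(G)$ was arbitrary, $G$ enjoys property~$\Ri$, as required.

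There is essentially no serious obstacle here; the argument is a cohomological repackaging of Theorem~\ref{thm:tecnico}. The only point that deserves care is the bookkeeping of the $\Aut(G)$-action on $H^1(G,M)$ (i.e.\ checking that a global fixed \emph{cohomology class} lifts to an honest $\Aut(G)$-invariant cocycle, which is automatic for trivial modules because coboundaries vanish, so $H^1(G,M)=\mathrm{Hom}(G,M)$ on the nose) and the observation that torsion-freeness of $M$ is precisely what ensures the ``infinite order element'' hypothesis of Theorem~\ref{thm:tecnico}.
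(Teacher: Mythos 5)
Your proof is correct and follows essentially the same route as the paper: identify $H^1(G,M)\cong\Hom(G,M)$ for a trivial torsion-free module, extract an $\Aut(G)$-invariant nontrivial homomorphism $c$, and feed it into Theorem~\ref{thm:tecnico}. The only (immaterial) difference is the choice of auxiliary normal subgroup in Theorem~\ref{thm:tecnico} — the paper uses $N=1$ so that the hypothesis $\barra{f}\circ\barra{\phee}=\barra{f}$ reduces directly to $c\circ\phee=c$, whereas you take $N=\ker(c)$ and verify its $\phee$-invariance separately; both satisfy the theorem's hypotheses and lead to the same conclusion.
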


\begin{proof}
First, a clarification. When computing cohomology with coefficients $H^\ast(G,M)$ using the standard cochain complex $C^\ast(G,M)$, then precomposing a cochain with an element $\phee \in \Aut(G)$ again yields a cochain; see, for instance, \cite[Chapter~III]{BrownCohomology}. While this \emph{a priori} yields no action of $\Aut(G)$ on cohomology (due to contravariance), inverting the automorphisms and then precomposing suffices --- this is the canonical action alluded to in the statement. (The action will be made clearer in the sequel.)

Now let $M$ be a trivial $G$-module with no torsion. Since $G$ acts trivially on $M$, the derivations $d \colon G \to M$ amount to (group) homomorphisms and the principal derivations are trivial. Thus, one obtains the (well-known) canonical isomorphism
\[ H^1(G,M) \cong \Hom(G,M), \]
cf. \cite[Chapter~III]{BrownCohomology}. Under the above identification, the canonical action $\Aut(G) \curvearrowright H^1(G,M) \cong \Hom(G,M)$ is given by
\begin{align*}
    \Aut(G) \times \Hom(G,M) &\longrightarrow \Hom(G,M) \\ (\phee,f) &\longmapsto \phee^\ast(f) \coloneqq f \circ \phee^{-1}.
\end{align*}
The existence of a nonzero global fixed point $[c] \in H^1(G,M) \curvearrowleft \Aut(G)$ means that there exists a corresponding nontrivial homomorphism $f_c \colon G \to M$ fixed by every automorphism of $G$, that is,
\[\phee^\ast(f_c) = f_c \circ \phee^{-1} = f_c,\]
whence $f_c= f_c\circ \phee$ for any $\phee \in \Aut(G)$. Since $M$ is torsion-free and $f_c$ is nontrivial, the image of $f_c$ obviously contains an element of infinite order. We can thus apply Theorem~\ref{thm:tecnico} taking $N = 1 \nsgp G$ and $A=M$ and $f=f_c$, yielding $\vert \Fix(\ab{\phee})\vert = \infty$ for any $\phee \in \Aut(G)$. By Lemmas~\ref{LeminhaDeSempre} and~\ref{OutroLeminhaDeSempre}, it follows that $R(\phee) = \infty$ for all $\phee \in \Aut(G)$, which finishes the proof.
\end{proof}

Theorem~\ref{thm:babyversiontecnicointro} is just a special case of the previous result.

\begin{proof}[Proof of Theorem~\ref{thm:babyversiontecnicointro}]
Take the contrapositive of Theorem~\ref{thm:tecnicohomology} with $M = \Z$ as a trivial $\Gamma$-module.
\end{proof}

\subsection{The case of Thompson-like groups} \label{sec:ProofsThompson}

We now apply our machinery to the Thompson-like groups considered here, following the same line of arguments as Gon\c{c}alves--Kochloukova in \cite[Section~3]{GonDesi2010}. Recall that $\mc{F}$ is the family consisting of the $F$-like Bieri--Strebel groups $G([0,\ell];A,P)$, the Lodha--Moore groups $\lmmenor$, $\lmesq$, $\lmdir$, $\lmmaior$, and the braided Thompson group $\Fbr$ defined in Section~\ref{sec:osgrupos}.

To give a (mostly) self-contained, non-overly technical proof of Theorem~\ref{mainthm}, we need some further facts about characters for the Lodha--Moore groups and $\Fbr$. 

We keep the notation established in Section~\ref{sec:sigma-reid}. Moreover, for $n \in \N$, we define $\llbracket n\rrbracket=\{1, \dots, n\}$ and $\llbracket n\rrbracket_0=\{0\}\cup\llbracket n\rrbracket$. Recall from Theorem~\ref{thm:BNS-Gammai} that $\Sigma^1(\Gamma_i)^c= P_{i}$. Now fix 
\[N_i=\bigcap_{[\chi] \in P_i} \ker(\chi)\quad\text{and}\quad V_i=\textup{Hom}(\Gamma_i/N_i)\] 
where $i\in\llbracket4\rrbracket_{0}$. We shall describe $\Gamma_i/N_i$ more precisely. In what follows, given $g \in \Gamma_i$, denote by $\overline{g}$ its canonical image in $\Gamma_i / N_i$.

\begin{prop} \label{pps:isom}
The group $\Gamma_i/N_i$ is isomorphic to $\Z^2$ for all $i\in \llbracket 4\rrbracket_0$. 
\end{prop}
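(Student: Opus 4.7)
The plan is to view the two characters in each $P_i$ as coordinate functions of a single homomorphism $\Gamma_i \to \Z^2$ whose kernel is $N_i$, and then verify that the image of this homomorphism is a rank-$2$ subgroup of $\Z^2$. For each $i \in \llbracket 4 \rrbracket_0$, Theorem~\ref{thm:BNS-Gammai} shows that $P_i$ consists of exactly two classes, so I would fix discrete representatives $\chi^{(i)}_L, \chi^{(i)}_R \in \Hom(\Gamma_i, \Z)$ of these classes. Since the kernel of any nonzero character depends only on its equivalence class in $S(\Gamma_i)$, one has $N_i = \ker(\chi^{(i)}_L) \cap \ker(\chi^{(i)}_R)$, and this is precisely the kernel of the homomorphism
\[\Psi_i \colon \Gamma_i \longrightarrow \Z \oplus \Z, \qquad g \longmapsto \left( \chi^{(i)}_L(g),\, \chi^{(i)}_R(g) \right).\]
By the first isomorphism theorem this yields an embedding $\Gamma_i/N_i \hookrightarrow \Z^2$.

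With this setup the proof reduces to a rank computation: every subgroup of $\Z^2$ is free abelian of rank at most $2$, so $\Gamma_i/N_i \cong \Z^2$ as soon as $\mathrm{Im}(\Psi_i)$ has rank exactly $2$, equivalently once $\chi^{(i)}_L$ and $\chi^{(i)}_R$ are shown to be $\R$-linearly independent in $\Hom(\Gamma_i, \R)$. The key observation enabling this is that, in every one of the five cases listed in Theorem~\ref{thm:BNS-Gammai}, the pair of characters is ``left-supported'' versus ``right-supported'': one character vanishes on all generators indexed by sequences $1^n$ (respectively, right-leaf data in $\Fbr$) and the other vanishes on those indexed by $0^n$ (respectively, left-leaf data). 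Concretely, for each $i$ I would exhibit two group elements $g^{(i)}_L, g^{(i)}_R$ whose $2\times 2$ matrix of values under $(\chi^{(i)}_L, \chi^{(i)}_R)$ is nonsingular --- for instance, $x_0$ and $x_1$ for $\Gamma_1 = G$; analogous pairings such as $y_0$ and $x_1$, $x_0$ and $y_1$, or $y_0$ and $y_1$ for the four Lodha--Moore groups; and $x_0, x_1 \in F \leq \Fbr$ for $\Gamma_0$, whose images under $(\varphi_0, \varphi_1)$ are read off the standard split-merge tree diagrams.

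The principal --- though mild --- obstacle is the case-by-case bookkeeping, which is most delicate for $\Fbr$, since one must compute the leftmost- and rightmost-leaf distances $L(T_\pm)$ and $R(T_\pm)$ of the trees underlying the chosen generators. In all five situations, however, the ``left'' and ``right'' characters are \emph{visibly} supported on disjoint subsets of the listed generators, so linear independence follows with no serious obstruction. Once this verification is carried out uniformly, $\mathrm{Im}(\Psi_i)$ is a rank-$2$ subgroup of $\Z^2$ and hence isomorphic to $\Z^2$, from which $\Gamma_i/N_i \cong \Z^2$ follows immediately.
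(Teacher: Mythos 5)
Your proposal is correct, and it takes a genuinely different — and arguably more economical — route than the paper. The paper argues by producing, for each $i$, an explicit generating set of $\Gamma_i/N_i$ (just $\overline{x}_0$ and $\overline{x}_1$ in the cases it writes out), and this step is where the real work lies: for $i=1$, for example, the authors invoke the defining relators (LM1) and (LM2) to rewrite all the $\overline{x}_{0^n}$ and $\overline{x}_{1^n}$ in terms of $\overline{x}_0, \overline{x}_1$, and only then verify that these two images satisfy no nontrivial relation and hence yield a map onto $\Z^2$. Your argument bypasses the generating-set computation entirely: you observe that the discrete representatives of the two classes in $P_i$ give a homomorphism $\Psi_i \colon \Gamma_i \to \Z^2$ whose kernel is by definition $N_i$, so the first isomorphism theorem already hands you an embedding $\Gamma_i/N_i \hookrightarrow \Z^2$; it then suffices to certify that the image has rank $2$, which is a finite check on two elements. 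This makes the proof uniform across all five cases, whereas the paper's version needs case-by-case manipulation of the Lodha--Moore presentations. What the paper's method buys, in exchange, is an explicit pair of generators of $\Gamma_i/N_i$ with a clean identification of the isomorphism — information which is reused in the subsequent arguments (e.g.\ Corollary~\ref{cor:laststep}) — while your approach proves the isomorphism abstractly. Your intermediate claim that rank $2$ is equivalent to $\R$-linear independence of the two characters in $\Hom(\Gamma_i,\R)$ is correct, and the witness pairs you suggest ($x_0,x_1$ for $\Gamma_1$; $y_0,x_1$; $x_0,y_1$; $y_0,y_1$; and $x_0,x_1 \in F \leq \Fbr$) do give nonsingular $2\times 2$ evaluation matrices, with the $\Fbr$ case being the only one where both characters are nonzero on the same generator (one finds $\varphi_0(x_0)=-1$, $\varphi_1(x_0)=1$, $\varphi_0(x_1)=0$, $\varphi_1(x_1)=1$) — as you correctly flag, that case alone requires reading off the tree diagrams rather than appealing to disjoint supports.
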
 
\begin{proof} 
Throughout we let $\{e_1,\ldots,e_k\}$ denote the canonical basis for $\Z^k$. It is clear that the $\Gamma_i/N_i$ are abelian and, since $\R$ is itself torsion-free, the same holds for $\Gamma_i / N_i$. 

Looking back at the generating set of $\Gamma_0 = \Fbr$ (cf. Section~\ref{sec:btg}), one sees that elements of the form $(T,b,T) \in \Gamma_0$ must lie in $N_0=\ker(\varphi_0)\cap \ker(\varphi_1)$. In particular, the generators of $\Gamma_0$ of the form $\alpha_{ij}$ and $\beta_{ij}$ all belong to $N_0$. Thus $\Gamma_0/N_0$ is generated by the images of $x_0$ and $x_1$ under the projection $\Gamma_0 \onto \Gamma_0/N_0$. Since $\x_0^{n}\x_1^{m} \in N_0$ if and only if $0=n=m$, the map $f\colon \Gamma_0/N_0 \to \Z^2$ given by $f(\x_0)=e_1$ and $f(\x_1)=e_2$ is an isomorphism.

Now we check the isomorphism only in the case $i=1$, as the remaining cases are established along similar lines. Since $x_s, y_t \in N_1$ for all $s \in 2^\N\setminus\{0^n,1^n\}_{n=1}^{\infty}$ and $t \in 2^\N$, the group $\Gamma_1/N_1$ is generated by $\x_{0^n}, \x_{1^n}$ with $n\in \N_0$. 
We claim that this quotient is generated by $\x_0$ and $\x_1$. Indeed, since $x_0(0^n)=0^{n-1}$, relation~(LM2) implies that 
\[\x_{0^3}=\x_{0}^{-1}\x_{0^2}\x_0.\]
Inductively, we have 
\[\x_{0^n}=\x_{0}^{-1}\x_{0^{n-1}}\x_0=\x_{0}^{2-n}\x_{0^{2}}\x_{0}^{n-2}.\]
The relation~(LM1) with $s=0$ 
gives $x_0^2=x_{0^2}x_0x_{01}$. Since $x_{01} \in N_1$, it follows that $\x_0=\x_{0^2}$.
Similar arguments show that 
\[\x_{1^n}=x_{1}^{n-2}\x_{1^2}\x_{1}^{2-n},\] and $\x_1=\x_{1^2}$.
Last but not least, relation~(LM1) with \(s=\text{\o}\) implies
\[\x^{2}=\x_{1}\x\x_{0}\]
in \(\Gamma_{1}/N_{1}\). Since this group is abelian, we get 
\[\x=\x_{0}\x_{1}.\]
We then conclude that $\Gamma_1/N_1$ is generated by $\x_0, \x_1$. Because $\x_0^a\x_1^b\in N_1 \iff 0=-a=b$, the map $g\colon \x_0 \mapsto e_1, \x_1 \mapsto e_2$ extends to an isomorphism $\Gamma_1/N_1 \cong \Z^2$.
\end{proof}

It follows from Proposition~\ref{pps:isom} that $V_i\cong\Hom(\Z^2,\R) \cong \R^2$.

\begin{cor}\label{cor:laststep} 
For each \(i\in\llbracket4\rrbracket_0\) the image of $\left\{\chi \mid \left[\chi\right] \in P_i\right\}$ in $V_i\cong\R^2$ is a basis for $V_i$. 
\end{cor}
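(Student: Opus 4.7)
The plan is to show, for each $i \in \llbracket 4\rrbracket_0$, that the two characters comprising $P_i$ induce linearly independent functionals on $\Gamma_i/N_i$. Since $|P_i|=2$ by Theorem~\ref{thm:BNS-Gammai} and $\dim V_i = 2$ by Proposition~\ref{pps:isom}, linear independence is equivalent to spanning, so this is all that is required.

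The first step is essentially a tautology: by the very definition $N_i = \bigcap_{[\xi]\in P_i}\ker(\xi)$, each representative $\chi$ of a class in $P_i$ vanishes on $N_i$ and therefore descends to a well-defined functional $\overline{\chi} \in V_i = \Hom(\Gamma_i/N_i,\R)$.

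The second step is to pick an explicit basis of the rank-two abelian quotient $\Gamma_i/N_i$. These are provided by Proposition~\ref{pps:isom} and its proof: $\overline{x_0}, \overline{x_1}$ in the cases $i \in \{0,1\}$, and analogous pairs (namely $\overline{y_0}, \overline{x_1}$ for $\Gamma_2$, $\overline{x_0}, \overline{y_1}$ for $\Gamma_3$, and $\overline{y_0}, \overline{y_1}$ for $\Gamma_4$) in the remaining cases. On such a basis one then writes the $2 \times 2$ matrix of values of the two induced characters.

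The third step is direct inspection, using the explicit formulas from Section~\ref{sec:charGammai}. For $i \in \{1,2,3,4\}$, each of $\chi_0, \chi_1, \psi_0, \psi_1$ is supported on a single family among $\{x_{0^n}\}$, $\{x_{1^n}\}$, $\{y_{0^n}\}$, $\{y_{1^n}\}$, and the two characters forming $P_i$ sit on disjoint families; the resulting matrix is therefore diagonal with nonzero entries. For $i=0$, a direct computation of $\varphi_0$ and $\varphi_1$ on the standard tree-pair diagrams representing $x_0, x_1 \in F \leq \Fbr$ produces a triangular matrix with nonzero diagonal entries. In every case the matrix has nonzero determinant, which gives linear independence of the induced characters in $V_i$. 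The main potential obstacle is purely organizational, a case-by-case bookkeeping across the five groups; given Proposition~\ref{pps:isom} and the explicit character formulas, no single verification is substantial.
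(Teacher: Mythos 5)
Your proposal is correct and follows essentially the same route as the paper: reduce to linear independence (since $|P_i|=2=\dim V_i$), evaluate the two descended characters on a rank-two generating pair of $\Gamma_i/N_i$ coming from Proposition~\ref{pps:isom}, and observe that the resulting $2\times 2$ matrix has nonzero determinant (diagonal for $i\in\{1,2,3,4\}$, triangular for $i=0$). One small imprecision worth flagging: for $\Gamma_4=\lmmaior$ the two families $\{y_{0^n}\}_{n\geq 0}$ and $\{y_{1^n}\}_{n\geq 0}$ are not disjoint (they share $y_{\text{\o}}=y_{0^0}=y_{1^0}$, on which both $\psi_0$ and $\psi_1$ take the value $1$), so the diagonality argument works only because your chosen basis $\overline{y_0},\overline{y_1}$ uses exponents $n\geq 1$ and thus avoids $y_{\text{\o}}$; this should be stated rather than attributed to disjointness of supports.
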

\begin{proof} 
We first argue that the canonical image  $\left\{\overline{\varphi}_0,\overline{\varphi}_1\right\}$ of $\left\{\varphi_0,\varphi_1\right\}$ in $V_0 = \Hom(\Gamma_0/N_0,\R)$ is a basis of $V_0 \cong \R^2$. Let $\alpha, \beta \in \R$ satisfy 
\begin{equation}\label{eq:lmeq} \alpha\overline{\varphi}_0+\beta\overline{\varphi}_1 \equiv 0
\end{equation} in $\Gamma_0/N_0$. Equality~\eqref{eq:lmeq} means that $\alpha\overline{\varphi}_0(\overline{w})+\beta\overline{\varphi}_1(\overline{w})=0$ for all $\overline{w} \in \Gamma_0/N_0$, 
and linear independence means both $\alpha$ and $\beta$ must be $0$. Since 
\[\alpha{\overline{\varphi}_0}(\x_0)+\beta{\overline{\varphi}_1}(\x_0)=-\alpha+\beta \quad \text{ and } \quad \alpha\overline{\varphi}_0(\x_1)+\beta\overline{\varphi}_1(\x_1)=\beta,\] 
we see that the only solution for~$\eqref{eq:lmeq}$ is $(\alpha,\beta)=(0,0)$, as desired.

Lastly we again restrict ourselves to the case $i=1$, the remaining ones being entirely analogous.
To check linear independence 
let $\alpha, \beta \in \R$ satisfy 
\begin{equation}\label{eq:lmeq2} \alpha\overline{\chi}_0+\beta\overline{\chi}_1 \equiv 0
\end{equation} in $\Gamma_1/N_1$. 
Recall that \(\Gamma_{1}/N_{1}\) is generated by \(\{\x_{0},\x_{1}\}\), so that
\[\alpha\overline{\chi}_0(\x_{0})+\beta\overline{\chi}_1(\x_{0})=-\alpha\quad\text{ and }\quad \alpha\overline{\chi}_0(\x_{1})+\beta\overline{\chi}_1(\x_{1})=\beta.\]
Thus the only solution for equation~\eqref{eq:lmeq2} is \((\alpha,\beta)=(0,0)\).
\end{proof}

With elementary facts established, we can prove the main result from the Introduction. 

\begin{proof}[Proof of Theorem~\ref{mainthm}]
Let $\Gamma \in \mc{F}$. If $\Gamma$ is one of the $F$-like groups of Bieri--Strebel (cf. Section~\ref{sec:BiSt}), we know from Theorem~\ref{thm:GNT} that there exists a nontrivial homomorphism $f\colon \Gamma \to \R$ such that $f \circ \phee = f$ for \emph{any} $\phee \in \Aut(\Gamma)$. We then just apply Theorem~\ref{thm:tecnico} with $N = 1$ and $A=\R$.

Now suppose $\Gamma$ is one of the Lodha--Moore groups or $\Fbr$ and let $\phee \in \Aut(\Gamma)$. By Theorem~\ref{thm:BNS-Gammai}, one has that $\Sigma^1(\Gamma)^c$ consists of two (classes of) discrete characters. By Corollary~\ref{cor:laststep}, there are representatives $\chi_1$ and $\chi_2$ of such classes so that their respective
images $\barra{\chi_1},\barra{\chi_2} \in \Hom(\Gamma/N,\R)$ are linearly independent. 

Now set $N = \ker(\chi_1\cap\ker(\chi_2)$. Since the natural action of $\phee$ on the character sphere $S(\Gamma)$ stabilizes the whole invariant $\Sigma^1(\Gamma)^c$ (see \cite[p.~47]{StrebelNotes}), one has that $N$ is $\phee$-invariant. We can thus consider the induced map $\barra{\phee} \in \Aut(\Gamma/N)$. Following \cite[Lemma~3.1]{GonDesi2010} (up to replacing the representatives $\chi_1$ and $\chi_2$ to obtain appropriate integer coordinates for their respective images $\barra{\chi_1}$ and $\barra{\chi_2}$), we obtain 
\begin{equation} \label{eq:invariancia}
\barra{\phee}(\set{\barra{\chi_1},\barra{\chi_2}}) = \set{\barra{\chi_1},\barra{\chi_2}}.
\end{equation} 
(We remark that $\ker(\chi_1) = \ker(r\chi_1)$ and $\ker(\chi_2) = \ker(r\chi_2)$ for any $r \in \R \setminus \set{0}$.) Defining 
\begin{align*}
    f\colon \Gamma &\longrightarrow \R \\
            g &\longmapsto \chi_1(g) + \chi_2(g),
\end{align*}
one has $N \subseteq \ker(f)$. It is clear that $f \in \Hom(\Gamma,\R)$ is nontrivial (e.g., by linear independence of the $\barra{\chi_1}$ and $\barra{\chi_2}$) and that $f(\Gamma)$ has elements of infinite order. Again using equality~\eqref{eq:invariancia}, it follows that the induced map $\barra{f}\colon \Gamma/N \to \R$ satisfies $\barra{f} \circ \barra{\phee} = \barra{f}$. Applying Theorem~\ref{thm:tecnico} to $N$, $A=\R$ and $f$ chosen above thus finishes off the proof.
\end{proof}

Although we have restricted ourselves to $F$-like Bieri--Strebel groups as defined in Section~\ref{sec:BiSt}, it should be noted that there are many more Bieri--Strebel groups for which Theorem~\ref{thm:tecnico} applies, as the following shows.

\begin{exm}[Gon\c{c}alves--Sankaran--Strebel] \label{obs:uncount}
Start with $p \in \R_{>1}$ and $q = e^{a/b}$ with $\frac{a}{b}\in \Q_{>1}$. Then, choose $r \in T_{>1} \subset \R$, where $T$ is a set of \emph{irrational} representatives for the orbits of the action of the group $\left( \begin{smallmatrix} a/b & 0 \\ 0 & 1 \end{smallmatrix} \right) \cdot \GL_2(\Z) \cdot \left( \begin{smallmatrix} b/a & 0 \\ 0 & 1 \end{smallmatrix} \right)$ by fractional linear transformations on $\mathbb{S}^1 \cong \R \cup \{\infty\}$. Now consider the following PL homeomorphisms $f_p$, $g_q$, $h_r$ of the unit interval $[0,1]$. 
\begin{align*}f_p(x) &= \begin{cases} \frac{x}{p} & \text{for } x \in \left[0,\frac{3p}{4p+4}\right], \\ 
\frac{3}{4p+4} + p\left(x-\frac{3p}{4p+4}\right) 
& \text{for } x \in \left[\frac{3p}{4p+4},\frac{3}{4}\right], \\ 
x & \text{for } x \in \left[\frac{3}{4},1\right]. \end{cases}\\
g_q(x) &= \begin{cases} x & \text{for } x \in \left[0,\frac{1}{4}\right] \\ 
\frac{1}{q}(x-\frac{1}{4}) + \frac{1}{4} & \text{for } x \in \left[\frac{1}{4},\frac{4q+1}{4q+4}\right], \\ 
\frac{q+4}{4q+4} + q\left(x-\frac{4q+1}{4q+4}\right) & \text{for } x \in \left[\frac{4q+1}{4q+4},1\right]. \end{cases}\\
h_r(x) &= \begin{cases} x & \text{for } x \in \left[0,\frac{1}{4}\right] \\ 
\frac{1}{r}\left(x-\frac{1}{4}\right) + \frac{1}{4} & \text{for } x \in \left[\frac{1}{4},\frac{4r+1}{4r+4}\right], \\ 
\frac{r+4}{4r+4} + r\left(x-\frac{4r+1}{4r+4}\right) & \text{for } x \in \left[\frac{4r+1}{4r+4},1\right]. \end{cases}
\end{align*}

 The Bieri--Strebel group $G(p,q,r) := \langle f_p, q_g, h_r \rangle \leq \mathrm{PL}_\circ([0,1])$ is finitely generated by construction, and varying the defining triple $(p,q,r)$ yields uncountably many such groups. Finally, by \cite[Theorem~1.7]{DacibergParameshStrebel}, there always exist a nontrivial homomorphism $\chi$ from $G(p,q,r)$ to a torsion-free abelian group such that $\chi \circ \phee = \chi$ for any $\phee \in \Aut(G(p,q,r))$. 
\end{exm}

Now Corollary~\ref{maincorollary} from the Introduction is easily deduced. For convenience, we restate it below.

\begin{cor} \label{cor:ThompsonwithRinfty}
Any $\Gamma \in \mc{F}$ has property~$\Ri$. In particular, Stein's group $F_{2,3}$, Cleary's irrational-slope group $F_\tau$, the Lodha--Moore groups $\lmmenor, \lmesq, \lmdir, \lmmaior$, and the braided Thompson group $\Fbr$ have $\Ri$.
\end{cor}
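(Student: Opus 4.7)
The plan is to deduce this corollary essentially for free from Theorem~\ref{mainthm} together with Lemmas~\ref{LeminhaDeSempre} and~\ref{OutroLeminhaDeSempre}. Fix an arbitrary $\Gamma \in \mc{F}$ and an automorphism $\phee \in \Aut(\Gamma)$. Since every group in the family $\mc{F}$ is finitely generated, its abelianization $\ab{\Gamma}$ is a finitely generated abelian group, so that Lemma~\ref{LeminhaDeSempre} is available.

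First I would invoke Theorem~\ref{mainthm}, which asserts that the fixed-point set $\Fix(\ab{\phee})$ of the induced automorphism on $\ab{\Gamma}$ is infinite. Applying Lemma~\ref{LeminhaDeSempre} to $\ab{\phee} \in \Aut(\ab{\Gamma})$ then gives $R(\ab{\phee}) = \infty$. Next, because the commutator subgroup $[\Gamma,\Gamma]$ is characteristic in $\Gamma$, it is in particular $\phee$-invariant; Lemma~\ref{OutroLeminhaDeSempre} applied with $N = [\Gamma,\Gamma]$ therefore yields the desired inequality
\[ R(\phee) \;\geq\; R(\ab{\phee}) \;=\; \infty. \]
Since $\phee$ was arbitrary, $\Gamma$ has property~$\Ri$. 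The ``in particular'' clause is then immediate once one recalls from Section~\ref{sec:osgrupos} that Stein's $F_{2,3}$, Cleary's $F_\tau$, the four Lodha--Moore groups $\lmmenor, \lmesq, \lmdir, \lmmaior$, and the braided Thompson group $\Fbr$ all belong to $\mc{F}$.

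There is no real obstacle here, as the heavy lifting has already been done in the proof of Theorem~\ref{mainthm}; the corollary is a routine three-line deduction. For completeness I would also mention the alternative route sketched in the Introduction for the Lodha--Moore groups and $\Fbr$: combining Zaremsky's computation of $\Sigma^1(\Gamma_i)^c$ (Theorem~\ref{thm:BNS-Gammai}) with the general machinery of Gon\c{c}alves--Kochloukova from~\cite{GonDesi2010}, the key input being that the natural $\Aut(\Gamma)$-action on $S(\Gamma)$ preserves the finite set $\Sigma^1(\Gamma)^c$, and hence preserves (up to scalars) the finitely many distinguished characters that cut out the kernels used to feed Theorem~\ref{thm:tecnico}.
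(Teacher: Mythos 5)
Your proof is correct and matches the paper's own (one-line) proof exactly: the paper also deduces the corollary as immediate from Theorem~\ref{mainthm} together with Lemmas~\ref{LeminhaDeSempre} and~\ref{OutroLeminhaDeSempre}, and your spelled-out chain $|\Fix(\ab{\phee})| = \infty \Rightarrow R(\ab{\phee}) = \infty \Rightarrow R(\phee) = \infty$ is precisely the intended reasoning. The alternative route you mention for the Lodha--Moore groups and $\Fbr$ via Zaremsky and Gon\c{c}alves--Kochloukova is likewise the one the paper sketches afterward.
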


\begin{proof}
Immediate from Theorem~\ref{mainthm} and Lemmas~\ref{LeminhaDeSempre} and~\ref{OutroLeminhaDeSempre}.
\end{proof}

As mentioned, Corollary~\ref{cor:ThompsonwithRinfty} had already been established for `most' groups in the family $\mathcal{F}$, including $F_{2,3}$ and $F_\tau$; see \cite{GonDesi2010,DacibergParameshStrebel}. While Corollary~\ref{cor:ThompsonwithRinfty} for the Lodha--Moore groups and $\Fbr$ has not appeared elsewhere before, we point out that it also follows directly from the work of Zaremsky in \cite{Zar2016,braidedVZaremskyNormal} combined with \cite[Theorem~3.2]{GonDesi2010}. Below we briefly outline the arguments.

\begin{proof}[Alternative proof of Corollary~\ref{cor:ThompsonwithRinfty} for $\lmmenor, \lmesq, \lmdir, \lmmaior, \Fbr$.]
Gon\c{c}alves--Kochloukova deduced a direct criterion to check for property~$\Ri$ using the $\Sigma$-invariant; see~\cite[Theorem~3.2]{GonDesi2010}. To apply this result, the first step is to check that the complement of the BNS $\Sigma$-invariant for the group in question is finite, nonempty, and represented by discrete characters. This is the content of Zaremsky's Theorem~\ref{thm:BNS-Gammai}. For the last step, keeping the notation from the beginning of this section, one needs to check that the image of the discrete representatives $\{\chi \mid [\chi] \in \Sigma^1(\Gamma_i)^c\}$ in $V_i = \Hom(\Gamma_i/N_i,\R)$ is a basis for $V_i$, as we did in Corollary~\ref{cor:laststep}. But this result is also implicitly found in the work of Zaremsky; cf. \cite[Section~1.2]{Zar2016} for the Lodha--Moore case and \cite[Section~1.4]{braidedVZaremskyNormal} for the braided case.
\end{proof}

It is interesting to note that the alternative arguments above to deduce $R_\infty$ might fail for $F$-like Bieri--Strebel groups. Indeed, Spahn--Zaremsky~\cite{SpahnZaremsky} showed that $\Sigma^1(F_{2,3})^c$ contains nondiscrete characters, so that \cite[Theorem~3.2]{GonDesi2010} is not applicable. In contrast, Lewis Molyneux, Brita Nucinkis and the third author recently computed the BNSR $\Sigma$-invariants of $F_\tau$ --- particularly, $\Sigma^1(F_{\tau})^c$ is finite (nonempty), contains only discrete characters, and \cite[Theorem~3.2]{GonDesi2010} does apply.

As discussed at the end of Section~\ref{sec:newbackground}, it would be interesting to find non-residually finite groups with $R_\infty$ and infinite fixed point sets of automorphisms; cf. Question~\ref{qst:Motivacao}. Drawing from the work of Kochloukova, Mart\'inez-P\'erez and Nucinkis, we point out that many $F$-like Bieri--Strebel groups behave similarly to $F$ in this regard, as in Proposition~\ref{prop:Brin}.

\begin{prop} \label{propDesCoBri}
Let $n \in \N_{\geq 2}$ be arbitrary and write $\mathtt{BS}_n \coloneqq G([0,n-1];\Z[1/n],\gera{n})$. Then every $\phee \in \Aut(\mathtt{BS}_n)$ satisfies $\vert\Fix(\ab{\phee})\vert=\infty$ and, if $\phee$ is of \emph{finite} order, it also holds $\vert\Fix({\phee})\vert=\infty$. Moreover, there are infinitely many elements in $\Out(\mathtt{BS}_n)$ of finite order.
\end{prop}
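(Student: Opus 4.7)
The plan is to handle the three assertions in sequence, combining the main theorem of this paper with existing results on fixed points of finite-order automorphisms and on the structure of the automorphism group. For the first claim, I would simply observe that $\mathtt{BS}_n = G([0,n-1];\Z[1/n],\gera{n})$ is an $F$-like Bieri--Strebel group and hence lies in the family $\mc{F}$. Theorem~\ref{mainthm} then immediately yields $\vert \Fix(\ab{\phee}) \vert = \infty$ for every $\phee \in \Aut(\mathtt{BS}_n)$. This step is essentially a one-liner.

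For the second claim I would mirror the corresponding portion of the proof of Proposition~\ref{prop:Brin}. The key input is an analogue of \cite[Corollary~5.2]{DesCoBriFixed} for $\mathtt{BS}_n$: any $\phee \in \Aut(\mathtt{BS}_n)$ of finite order has $\Fix(\phee)$ either infinitely generated or containing an isomorphic copy of $\mathtt{BS}_n$ itself. In either case $\vert\Fix(\phee)\vert = \infty$. Since Kochloukova, Mart\'inez-P\'erez and Nucinkis develop their machinery in a framework that already covers Higman--Thompson-type generalizations of $F$, I expect their argument to adapt to $\mathtt{BS}_n$ with only cosmetic changes; this verification should be routine bookkeeping rather than a new idea.

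For the third claim I would appeal to a Brin-type structural description of $\Aut(\mathtt{BS}_n)$. Concretely, one expects a finite-index normal subgroup $\Aut^+(\mathtt{BS}_n) \nsgp \Aut(\mathtt{BS}_n)$ fitting into a short exact sequence whose quotient involves the Higman--Thompson simple group $T_n$ (or a close cousin acting on $[0,n-1]$). Because Thompson's group $T$ embeds into $T_n$, and $T$ carries infinitely many elements of finite order by~\cite{GhysSergiescu}, the same holds in $T_n$. Lifting this torsion back through the sequence and passing to the quotient by $\mathrm{Inn}(\mathtt{BS}_n)$ should produce infinitely many torsion elements in $\Out(\mathtt{BS}_n)$, provided the torsion elements are checked to be genuinely outer.

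The main obstacle will be the third step. While step~(1) is essentially a corollary of Theorem~\ref{mainthm} and step~(2) reduces to a careful reading of~\cite{DesCoBriFixed}, the third step requires a sufficiently explicit Brin-style decomposition of $\Aut(\mathtt{BS}_n)$ together with a verification that the torsion produced by the $T_n$-factor survives the passage to $\Out(\mathtt{BS}_n)$. Locating or assembling this decomposition in the literature, and ruling out the possibility that the constructed torsion becomes inner, is the delicate technical point.
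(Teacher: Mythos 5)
Your steps (1) and (2) track the paper's argument closely. Step (1) is identical: $\mathtt{BS}_n$ lies in $\mc{F}$ and Theorem~\ref{mainthm} finishes it. Step (2) has the right idea; the paper works with $F_{n,\infty}$ rather than $\mathtt{BS}_n$ directly (passing through \cite[Lemma~2.1.6]{BrinGuzman}, \cite[Lemma~2.1]{DesCoBriFixed} and \cite[Lemma~2.3.1]{BrinGuzman}), and then cites \cite[Lemmas~4.2, 5.1, Theorem~4.14, Proposition~4.4]{DesCoBriFixed} to show that $\Fix(\phee)$ is either infinitely generated or contains a copy of $F_{n,\infty}$. Your appeal to ``an analogue of \cite[Corollary~5.2]{DesCoBriFixed}'' captures the same dichotomy, so the substance matches even if the citations are coarser.

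Step (3) is where you diverge from the paper, and it is also where your plan underestimates the difficulty. The paper does \emph{not} build torsion in $\Out(\mathtt{BS}_n)$ from a Brin-style extension for $n\geq 3$; it instead invokes the explicit `exotic' finite-order automorphisms constructed in \cite[Section~10]{DesCoBriFixed}. The reason is substantive: Brin--Guzm\'an~\cite{BrinGuzman} showed that for $n\geq 3$ the subgroup of $\Aut(F_{n,\infty})$ coming from PL conjugation (the piece that would supply a $T_{n,r}\times T_{n,r}$-type quotient) is a \emph{proper} subgroup, and $\Aut(F_{n,\infty})$ contains exotic automorphisms that lie outside any such clean decomposition. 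So the tidy index-two short exact sequence you ``expect'' by analogy with Brin's $\Aut(F)$ is not available. Your route is salvageable --- torsion inside the PL normalizer already maps to torsion in $\Out$, and $T_{n,r}$ has abundant finite-order elements --- but you would still need to verify that such torsion classes are non-inner and pairwise distinct in $\Out$, which is not automatic without the structural control that $\Aut(F)$ enjoys and $\Aut(F_{n,\infty})$ lacks. The paper's citation of the already-constructed exotic finite-order automorphisms sidesteps this entirely, and for $n=2$ falls back on Proposition~\ref{prop:Brin}. Net assessment: your approach could be completed, but it demands more work than ``lifting torsion through a short exact sequence,'' and you should not rely on a $T_n\times T_n$ quotient existing for $n\geq 3$.
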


\begin{proof}
That $\vert\Fix(\ab{\phee})\vert=\infty$ for any $\phee\in\Aut(\mathtt{BS}_n)$ has just been proved in Theorem~\ref{mainthm} (and $\mathtt{BS}_n$ has $R_\infty$).

We now recall that, for every $n \in \N$ and $i \in \Z[1/n]$, the generalized Thompson groups $F_{n,\infty}$ and $F_{n,i}$ from~\cite{BrinGuzman} are isomorphic by \cite[Lemma~2.1.6]{BrinGuzman} and \cite[Lemma~2.1]{DesCoBriFixed}, and in turn the $F_{n,0}$ are isomorphic to the $F$-like groups $\mathtt{BS}_n$ of Bieri--Strebel; cf. \cite[Lemma~2.3.1 and Definition~1.1.1]{BrinGuzman}. We may thus work with $F_{n,\infty}$ instead of $\mathtt{BS}_n$. 

Now suppose $\phee \in \Aut(F_{n,\infty})$ has finite order. If the fixed subgroup $\Fix(\phee)$ is \emph{infinitely} generated, we are done. Otherwise, it follows from \cite[Lemmas~4.2 and~5.1]{DesCoBriFixed} that there is an element $f \in \Fix(\phee)$ fixing a point $i \notin \Z[1/n] \subset \R$ with slope not equal to $1$ at $i$. By \cite[Theorem~4.14]{DesCoBriFixed}, this condition implies that $\Fix(\phee)$ is isomorphic to the group $F_{n,[i,\infty]}$, which in turn contains (multiple copies of) $F_{n,\infty}$ itself by~\cite[Proposition~4.4]{DesCoBriFixed}. Thus, again one has $|\Fix(\phee)| = \infty$.

For the last claim, the case $n=2$ has been dealt with in Proposition~\ref{prop:Brin} since $F=G([0,1];\Z[1/2],\gera{2})=\mathtt{BS}_2$. For $n\geq 3$, Kochloukova--Mart\'inez-P\'erez--Nucinkis construct in \cite[Section~10]{DesCoBriFixed} infinitely many `exotic' automorphisms of finite order, which implies the claim.
\end{proof}

We close with related open questions. The automorphism groups of $F \subset T \subset V$ and $\mathtt{BS}_n$ are by now well studied~\cite{BCMNO,BrinGuzman}. Moreover, the generalizations $T_{n,r}$ of $T$ also have property~$\Ri$~\cite{BurilloMatucciVentura,Shayo}. We ask: 

\begin{qst} \label{qst:maisRinfty} 
What are the automorphism groups of $\Fbr$ and of the Lodha--Moore groups? Do analogues of exotic automorphisms~\cite{BrinGuzman} exist for such groups? Does Thompson's group $V$ have property~$\Ri$?
\end{qst}

We also remark that the proof by Burillo--Matucci--Ventura that $F$ and $T$ have property~$\Ri$ employs combinatorial techniques and relates to decision problems~\cite{BurilloMatucciVentura}. In particular, they solve the twisted conjugacy problem for $F$. Since there has been recent progress~\cite{theMFOreport,FrancescoAltair0} 
on the study of conjugacy classes of Thompson groups closely related to $\Fbr$, $\lmmenor, \lmesq, \lmdir$ and $\lmmaior$, we are also led to the following. 

\begin{qst} \label{qst:conjugacao}
Is the conjugacy (or twisted conjugacy) problem decidable for {all} the groups in $\mathcal{F}$?
\end{qst}

\section*{Acknowledgments}
The authors would like to thank Dawid Kielak, Ian Leary, Lewis Molyneux, Brita Nucinkis, and Rachel Skipper for helpful remarks and discussions. We are also indebted to the anonymous referees for key comments and suggestions which improved the paper, particularly regarding Proposition~\ref{PropK}. 

\bibliographystyle{acm}
\bibliography{references}

\begin{thebibliography}{10}

\bibitem{rewritting}
{\sc Baader, F., and Nipkow, T.}
\newblock {\em Term rewriting and all that}.
\newblock Cambridge University Press, Cambridge, 1998.

\bibitem{BartholdiKaimanovichNekrashevych}
{\sc Bartholdi, L., Kaimanovich, V.~A., and Nekrashevych, V.~V.}
\newblock On amenability of automata groups.
\newblock {\em Duke Math. J. 154}, 3 (2010), 575--598.

\bibitem{TBook}
{\sc Bekka, B., de~la Harpe, P., and Valette, A.}
\newblock {\em Kazhdan's property ({T})}, vol.~11 of {\em New Mathematical
  Monographs}.
\newblock Cambridge University Press, Cambridge, 2008.

\bibitem{BelkMatucci}
{\sc Belk, J., and Matucci, F.}
\newblock Conjugacy and dynamics in {T}hompson's groups.
\newblock {\em Geom. Dedicata 169\/} (2014), 239--261.

\bibitem{BerrickChatterjiMislin}
{\sc Berrick, A.~J., Chatterji, I., and Mislin, G.}
\newblock Homotopy idempotents on manifolds and {B}ass' conjectures.
\newblock In {\em Proceedings of the {N}ishida {F}est ({K}inosaki 2003)\/}
  (2007), vol.~10 of {\em Geom. Topol. Monogr.}, Geom. Topol. Publ., Coventry,
  pp.~41--62.

\bibitem{BNS}
{\sc Bieri, R., Neumann, W.~D., and Strebel, R.}
\newblock A geometric invariant of discrete groups.
\newblock {\em Invent. Math. 90}, 3 (1987), 451--477.

\bibitem{BieriStrebelPL}
{\sc Bieri, R., and Strebel, R.}
\newblock {\em On groups of {PL}-homeomorphisms of the real line}, vol.~215 of
  {\em Mathematical Surveys and Monographs}.
\newblock American Mathematical Society, Providence, RI, 2016.

\bibitem{BCMNO}
{\sc Bleak, C., Cameron, P., Maissel, Y., Navas, A., and Olukoya, F.}
\newblock The further chameleon groups of {R}ichard {T}hompson and {G}raham
  {H}igman{:} {A}utomorphisms via dynamics for the {H}igman groups {$G_{n,r}$}.
\newblock {\em Mem. Amer. Math. Soc.\/} (2023).
\newblock To appear, arXiv:1605.09302.

\bibitem{BleakFelshtynDaciberg}
{\sc Bleak, C., Fel'shtyn, A., and Gon\c{c}alves, D.~L.}
\newblock Twisted conjugacy classes in {R}. {T}hompson's group {$F$}.
\newblock {\em Pacific J. Math. 238}, 1 (2008), 1--6.

\bibitem{BBCS}
{\sc Brady, T., Burillo, J., Cleary, S., and Stein, M.}
\newblock Pure braid subgroups of braided {T}hompson's groups.
\newblock {\em Publ. Mat. 52}, 1 (2008), 57--89.

\bibitem{BrinChameleon}
{\sc Brin, M.~G.}
\newblock The chameleon groups of {R}ichard {J}. {T}hompson: automorphisms and
  dynamics.
\newblock {\em Inst. Hautes \'{E}tudes Sci. Publ. Math.}, 84 (1996), 5--33.

\bibitem{BrinGuzman}
{\sc Brin, M.~G., and Guzm\'{a}n, F.}
\newblock Automorphisms of generalized {T}hompson groups.
\newblock {\em J. Algebra 203}, 1 (1998), 285--348.

\bibitem{BrownCohomology}
{\sc Brown, K.~S.}
\newblock {\em Cohomology of groups}, vol.~87 of {\em Graduate Texts in
  Mathematics}.
\newblock Springer-Verlag, New York, 1994.

\bibitem{theMFOreport}
{\sc Burillo, J., Bux, K.-U., and Nucinkis, B.}
\newblock Cohomological and metric properties of groups of homeomorphism of
  {$\mathbf{R}$}.
\newblock {\em Oberwolfach Reports 15}, 2 (2018), 1579--1633.

\bibitem{BurilloMatucciVentura}
{\sc Burillo, J., Matucci, F., and Ventura, E.}
\newblock The conjugacy problem in extensions of {T}hompson's group {$F$}.
\newblock {\em Israel J. Math. 216}, 1 (2016), 15--59.

\bibitem{BurNucRee}
{\sc Burillo, J., Nucinkis, B., and Reeves, L.}
\newblock An irrational-slope {T}hompson's group.
\newblock {\em Publ. Mat. 65}, 2 (2021), 809--839.

\bibitem{BFMWZ}
{\sc Bux, K.-U., Fluch, M.~G., Marschler, M., Witzel, S., and Zaremsky, M.
  C.~B.}
\newblock The braided {T}hompson's groups are of type {$F_\infty$}.
\newblock {\em J. reine angew. {M}ath. 718\/} (2016), 59--101.
\newblock With an appendix by Zaremsky.

\bibitem{CannonFloydParry}
{\sc Cannon, J.~W., Floyd, W.~J., and Parry, W.~R.}
\newblock Introductory notes on {R}ichard {T}hompson's groups.
\newblock {\em Enseign. Math. (2) 42}, 3-4 (1996), 215--256.

\bibitem{CCC}
{\sc Chowla, S., Cowles, J., and Cowles, M.}
\newblock On the number of conjugacy classes in {${\rm SL}(2,\,{\bf Z})$}.
\newblock {\em J. Number Theory 12}, 3 (1980), 372--377.

\bibitem{Cleary}
{\sc Cleary, S.}
\newblock Regular subdivision in {$\mathbb{Z}[\frac{1+\sqrt 5}{2}]$}.
\newblock {\em Illinois J. Math. 44}, 3 (2000), 453--464.

\bibitem{CohenLustigDynamics}
{\sc Cohen, M.~M., and Lustig, M.}
\newblock On the dynamics and the fixed subgroup of a free group automorphism.
\newblock {\em Invent. Math. 96}, 3 (1989), 613--638.

\bibitem{KarelLNM}
{\sc Dekimpe, K.}
\newblock {\em Almost-{B}ieberbach groups: affine and polynomial structures},
  vol.~1639 of {\em Lecture Notes in Mathematics}.
\newblock Springer-Verlag, Berlin, 1996.

\bibitem{KarelDacibergNilSurf}
{\sc Dekimpe, K., and Gon\c{c}alves, D.~L.}
\newblock The {$R_\infty$} property for nilpotent quotients of surface groups.
\newblock {\em Trans. London Math. Soc. 3}, 1 (2016), 28--46.

\bibitem{KarelSam}
{\sc Dekimpe, K., and Tertooy, S.}
\newblock Algorithms for twisted conjugacy classes of polycyclic-by-finite
  groups.
\newblock {\em Topology Appl. 293\/} (2021), Paper No. 107565, 12.

\bibitem{KarelSamIrisSolv}
{\sc Dekimpe, K., Tertooy, S., and V{an den Bussche}, I.}
\newblock Reidemeister spectra for solvamanifolds in low dimensions.
\newblock {\em Topol. Methods Nonlinear Anal. 53}, 2 (2019), 575--601.

\bibitem{FelshtynLeonovTroitsky}
{\sc Fel{'}shtyn, A., Leonov, Y., and Troitsky, E.}
\newblock Twisted conjugacy classes in saturated weakly branch groups.
\newblock {\em Geom. Dedicata 134\/} (2008), 61--73.

\bibitem{GeogheganFix}
{\sc Geoghegan, R.}
\newblock Fixed points in finitely dominated compacta: the geometric meaning of
  a conjecture of {H}.~{B}ass.
\newblock In {\em Shape theory and geometric topology ({D}ubrovnik, 1981)},
  vol.~870 of {\em Lecture Notes in Math.} Springer, Berlin-New York, 1981,
  pp.~6--22.

\bibitem{GhysSergiescu}
{\sc Ghys, E., and Sergiescu, V.}
\newblock Sur un groupe remarquable de diff\'eomorphismes du cercle.
\newblock {\em Comment. Math. Helv. 62}, 2 (1987), 185--239.

\bibitem{GonDesi2010}
{\sc Gon\c{c}alves, D., and Kochloukova, D.~H.}
\newblock Sigma theory and twisted conjugacy classes.
\newblock {\em Pacific J. Math. 247}, 2 (2010), 335--352.

\bibitem{DacibergWongCounterexample}
{\sc Gon\c{c}alves, D., and Wong, P.}
\newblock Twisted conjugacy classes in exponential growth groups.
\newblock {\em Bull. London Math. Soc. 35}, 2 (2003), 261--268.

\bibitem{DacibergWongCrelle}
{\sc Gon\c{c}alves, D., and Wong, P.}
\newblock Twisted conjugacy classes in nilpotent groups.
\newblock {\em J. reine angew. {M}ath. 633\/} (2009), 11--27.

\bibitem{DacibergParameshStrebel}
{\sc Gon\c{c}alves, D.~L., Sankaran, P., and Strebel, R.}
\newblock Groups of {PL}-homeomorphisms admitting nontrivial invariant
  characters.
\newblock {\em Pacific J. Math. 287}, 1 (2017), 101--158.

\bibitem{GuptaSidki0}
{\sc Gupta, N., and Sidki, S.}
\newblock On the {B}urnside problem for periodic groups.
\newblock {\em Math. Z. 182}, 3 (1983), 385--388.

\bibitem{Jabara}
{\sc Jabara, E.}
\newblock Automorphisms with finite {R}eidemeister number in residually finite
  groups.
\newblock {\em J. Algebra 320}, 10 (2008), 3671--3679.

\bibitem{DesCoBriFixed}
{\sc Kochloukova, D.~H., Mart\'{i}nez-P\'{e}rez, C., and Nucinkis, B. E.~A.}
\newblock Fixed points of finite groups acting on generalised {T}hompson
  groups.
\newblock {\em Israel J. Math. 187\/} (2012), 167--192.

\bibitem{LavreniukNekrashevych}
{\sc Lavreniuk, Y., and Nekrashevych, V.}
\newblock Rigidity of branch groups acting on rooted trees.
\newblock {\em Geom. Dedicata 89\/} (2002), 159--179.

\bibitem{LevittLustig}
{\sc Levitt, G., and Lustig, M.}
\newblock Most automorphisms of a hyperbolic group have very simple dynamics.
\newblock {\em Ann. Sci. \'{E}cole Norm. Sup. (4) 33}, 4 (2000), 507--517.

\bibitem{PaYu0}
{\sc Lins{ de A}raujo, P.~M., and Santos~Rego, Y.}
\newblock Twisted conjugacy in soluble arithmetic groups.
\newblock Preprint, arXiv:2007.02988, 2020.

\bibitem{LodhaMoore2016}
{\sc Lodha, Y., and Moore, J.~T.}
\newblock A nonamenable finitely presented group of piecewise projective
  homeomorphisms.
\newblock {\em Groups Geom. Dyn. 10}, 1 (2016), 177--200.

\bibitem{FrancescoAltair0}
{\sc Matucci, F., and de~Oliveira-Tosti, A.~S.}
\newblock Conjugacy and centralizers in groups of piecewise projective
  homeomorphisms.
\newblock {\em Groups Geom. Dyn. 16}, 1 (2022), 1--28.

\bibitem{TimurUnitri}
{\sc Nasybullov, T.}
\newblock Twisted conjugacy classes in unitriangular groups.
\newblock {\em J. Group Theory 22}, 2 (2019), 253--266.

\bibitem{Shayo}
{\sc Olukoya, F.}
\newblock Automorphisms of the generalized {T}hompson's group {$T_{n,r}$}.
\newblock {\em Trans. London Math. Soc. 9}, 1 (2022), 86--135.

\bibitem{PieterProducts}
{\sc Senden, P.}
\newblock Twisted conjugacy in direct products of groups.
\newblock {\em Comm. Algebra 49}, 12 (2021), 5402--5422.

\bibitem{SpahnZaremsky}
{\sc Spahn, R., and Zaremsky, M. C.~B.}
\newblock The {BNSR}-invariants of the {S}tein group {$F_{2,3}$}.
\newblock {\em J. Group Theory 24}, 6 (2021), 1149--1162.

\bibitem{MelStein}
{\sc Stein, M.}
\newblock Groups of piecewise linear homeomorphisms.
\newblock {\em Trans. Amer. Math. Soc. 332}, 2 (1992), 477--514.

\bibitem{SteinbergEndo}
{\sc Steinberg, R.}
\newblock Endomorphisms of linear algebraic groups.
\newblock {\em Mem. Amer. Math. Soc.}, 80 (1968), 108.

\bibitem{StrebelNotes}
{\sc Strebel, R.}
\newblock Notes on the sigma invariants.
\newblock Preprint, arXiv:1204.0214v2, 2013.

\bibitem{OlgaRepsCAT0}
{\sc Varghese, O.}
\newblock Representations of groups with {${\rm CAT}(0)$} fixed point property.
\newblock {\em Arch. Math. (Basel) 111}, 3 (2018), 231--238.

\bibitem{WolfGrowth}
{\sc Wolf, J.~A.}
\newblock Growth of finitely generated solvable groups and curvature of
  {R}iemannian manifolds.
\newblock {\em J. Differential Geometry 2\/} (1968), 421--446.

\bibitem{Zar2016}
{\sc Zaremsky, M. C.~B.}
\newblock H{NN} decompositions of the {L}odha--{M}oore groups, and topological
  applications.
\newblock {\em J. Topol. Anal. 8}, 4 (2016), 627--653.

\bibitem{braidedVZaremskyNormal}
{\sc Zaremsky, M. C.~B.}
\newblock On normal subgroups of the braided {T}hompson groups.
\newblock {\em Groups Geom. Dyn. 12}, 1 (2018), 65--92.

\end{thebibliography}
\end{document}